\documentclass[11pt,a4paper]{article}

\usepackage{amsfonts,amsgen,amsmath,amstext,amsbsy,amsopn,amsthm,mathrsfs,comment}
\usepackage{tikz,multirow}

\usetikzlibrary{decorations.markings}
\tikzstyle{vertex}=[circle, draw, inner sep=0pt, minimum size=4pt]
\newcommand{\vertex}{\node[vertex]}

\setlength{\textwidth}{150mm} \setlength{\oddsidemargin}{7mm}
\setlength{\evensidemargin}{7mm} \setlength{\topmargin}{-5mm}
\setlength{\textheight}{245mm} \topmargin -18mm

\newtheorem{thm}{Theorem}
\newtheorem{claim}{Claim}
\newtheorem{lemma}{Lemma}
\newtheorem{prob}{Problem}
\newtheorem{remark}{Remark}
\newtheorem{cor}{Corollary}
\newtheorem{prop}{Proposition}

\baselineskip 15pt

\begin{document}

\title{\bf\Large On sufficient conditions for rainbow cycles
in edge-colored graphs}

\date{}

\author{Shinya Fujita\footnote{School of Data Science, Yokohama
City University, 22-2, Seto, Kanazawa-ku,
Yokohama, 236-0027, Japan. E-mail:~fujita@yokohama-cu.ac.jp (S. Fujita).}~~
~Bo Ning\thanks{Corresponding author. Center for Applied Mathematics,
Tianjin University, Tianjin, 300072, P.R. China. E-mail:~bo.ning@tju.edu.cn
(B. Ning).}~~~Chuandong Xu\thanks{School of Mathematics and Statistics,
Xidian University, Xi'an, 710071, P.R. China. E-mail: xuchuandong@xidian.edu.cn
(C. Xu).}~~~Shenggui Zhang\thanks{Department of Applied Mathematics,
Northwestern Polytechnical University,~Xi'an, Shaanxi, 710072,
P.R.~China. E-mail:~sgzhang@nwpu.edu.cn~(S. Zhang).}}
\maketitle
\begin{abstract}
Let $G$ be an edge-colored graph. We use $e(G)$ and $c(G)$ to denote the
number of edges of $G$ and the number of colors appearing on $E(G)$,
respectively. For a vertex $v\in V(G)$,
the \emph{color neighborhood} of $v$ is defined as the
set of colors assigned to the edges incident to $v$.
A subgraph of $G$ is \emph{rainbow} if all of its
edges are assigned with distinct colors. The well-known Mantel's theorem states
that a graph $G$ on $n$ vertices contains a triangle if
$e(G)\geq\lfloor\frac{n^2}{4}\rfloor+1$.
Rademacher (1941) showed that $G$ contains at least $\lfloor\frac{n}{2}\rfloor$
triangles under the same condition. Li, Ning, Xu and Zhang (2014)
proved a rainbow version of Mantel's theorem: An edge-colored
graph $G$ has a rainbow triangle if $e(G)+c(G)\geq n(n+1)/2$.
In this paper, we first characterize all graphs $G$ satisfying
$e(G)+c(G)\geq n(n+1)/2-1$ but containing no rainbow triangles.
Motivated by Rademacher's theorem,  we then characterize
all graphs $G$ which satisfy $e(G)+c(G)\geq n(n+1)/2$
but contain only one rainbow triangle. We further obtain two results on
color neighborhood conditions for the existence of rainbow
short cycles. Our results improve a previous theorem due to
Broersma, Li, Woeginger, and Zhang (2005). Moreover, we provide a sufficient condition
in terms of color neighborhood
for the existence of a specified number of vertex-disjoint
rainbow cycles.

\medskip
\noindent {\bf Keywords:} Edge-colored graph; Rainbow cycle;
Color neighborhood; Minimum color degree.
\smallskip
\end{abstract}

\section{Introduction}
Let $G$ be a graph. We use $V(G)$
and $E(G)$ to denote the vertex set and edge set of $G$,
respectively, and call $|G|:=|V(G)|$ and $e(G):=|E(G)|$ the \emph{order}
and the \emph{size} of $G$. For a subset $S$ of $V(G)$,
we use $G[S]$ to denote the subgraph of $G$ induced by $S$,
and $G-S$ to denote the subgraph $G[V(G)\backslash S]$.
When $V(H)=\{v\}$, we use $G-v$ instead of $G-\{v\}$.
For disjoint subsets $S,S'$ of $V(G)$, let $G[S,S']$
denote the bipartite subgraph of $G$ induced by $S$
and $S'$, i.e., $G[S,S']$ has classes $S,S'$
and edge set $\{xy\in E(G):x\in S, y\in S'\}$.

An edge-coloring of $G$ is a mapping $C: E(G)\rightarrow\mathbb{N}$,
where $\mathbb{N}$ is the set of all natural numbers. When $G$ has
such a coloring, we call it an \emph{edge-colored graph}. Let $G$ be an
edge-colored graph. We use $C(G)$ to denote the set of colors
appearing on the edges of $G$ and let $c(G):=|C(G)|$. For a vertex
$v\in V(G)$ and a subgraph $H$ of $G$, the \emph{color neighborhood}
of $v$ in $H$, denoted by $CN_H(v)$,
is defined as the set of colors assigned to the edges from
$v$ to $V(H)\backslash \{v\}$. The \emph{color degree} of $v$ in $H$
is denoted by $d_{H}^c(v):=|CN_H(v)|$; and the \emph{minimum color degree} of $G$,
denoted by $\delta^{c}(G)$, is equal to $\min\{d_{G}^{c}(v):v\in V(G)\}$.
When there is no fear of confusion,
we write $CN(v)$ and $d^c(v)$ instead of $CN_G(v)$ and $d^c_G(v)$ for short,
respectively. An edge-colored graph is \emph{rainbow} if all its edges receive distinct colors,
and \emph{monochromatic} if all its edges have the same color.
We use Bondy and Murty \cite{BM08}, and Chartrand and
Zhang \cite{CZ08} for notation and terminology not defined here.
For more results on related topics on rainbow subgraphs, we refer the reader to surveys due to Kano and
Li \cite{KL09}, and Fujita, Magnant and Ozeki \cite{FMO10,FMO14}.

We first recall some classical result on the existence of short
cycles in uncolored graphs. Mantel's theorem (1907) is one important
starting point of extremal graph theory, which is stated as every graph
$G$ on $n$ vertices contains a triangle if
$e(G)\geq \lfloor\frac{n^2}{4}\rfloor$, unless
$G\cong K_{\lceil n/2\rceil,\lfloor n/2\rfloor}$.
Li et al. \cite{LNXZ14} obtained a rainbow version of
Mantel's theorem.

\begin{thm}[Li, Ning, Xu, and Zhang \cite{LNXZ14}]\label{thm_rainbowK3}
Let $G$ be an edge-colored graph of order $n\geq 3$.
If $e(G)+c(G)\geq n(n+1)/2$, then $G$ contains a rainbow $C_3$.
\end{thm}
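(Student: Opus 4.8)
The plan is to establish the contrapositive by induction on $n$: if $G$ has no rainbow $C_3$, then $e(G)+c(G)\le n(n+1)/2-1$. For a vertex $v$, let $d^{*}(v)$ denote the number of colors that occur \emph{only} on edges incident to $v$. Deleting $v$ removes exactly $d(v)$ edges and $d^{*}(v)$ colors, and $G-v$ is again rainbow-$C_3$-free, so $e(G)+c(G)=\bigl(e(G-v)+c(G-v)\bigr)+d(v)+d^{*}(v)$. Hence, if $G$ has some vertex with $d(v)+d^{*}(v)\le n$, the inductive hypothesis applied to $G-v$ gives $e(G)+c(G)\le\bigl((n-1)n/2-1\bigr)+n=n(n+1)/2-1$, which is exactly what we want, and the base case ($n=2$ or $n=3$) is immediate. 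So the whole theorem reduces to one lemma: \emph{every rainbow-$C_3$-free edge-colored graph on $n\ge 2$ vertices has a vertex $v$ with $d(v)+d^{*}(v)\le n$.} This really does use the hypothesis — a rainbow-colored $K_n$ has $d(v)+d^{*}(v)=2(n-1)$ at every vertex — and a naive averaging bound $\sum_v(d(v)+d^{*}(v))\le 2e(G)+2c(G)$ can be as large as $n^2+n-2$, so it does not force such a vertex; something structural is needed.

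To prove the lemma I would assume for contradiction that $d(v)+d^{*}(v)\ge n+1$ for every $v$. Since $d(v)\le n-1$, this forces $d^{*}(v)\ge 2$ for all $v$: each vertex is the centre of at least two \emph{private} monochromatic stars, i.e.\ color classes all of whose edges lie at $v$. Fix such a $v$, with private colors $\gamma_1,\dots,\gamma_p$ (so $p=d^{*}(v)\ge 2$) and pairwise-disjoint nonempty leaf sets $S_1,\dots,S_p$. The key observation is that rainbow-$C_3$-freeness forbids any edge between distinct $S_i$ and $S_j$: an edge $xy$ with $x\in S_i$, $y\in S_j$ would complete a triangle $vxy$ with $C(vx)=\gamma_i\ne\gamma_j=C(vy)$, forcing $C(xy)\in\{\gamma_i,\gamma_j\}$, which is impossible since $\gamma_i,\gamma_j$ occur only at $v$. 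Consequently a leaf $x\in S_1$ has all its neighbours inside $\{v\}\cup(S_1\setminus\{x\})\cup T$, where $T=V(G)\setminus(\{v\}\cup S_1\cup\cdots\cup S_p)$; a one-line count using $|S_j|\ge 1$ for $j\ge 2$ gives $d(x)\le n-p$, whence $d^{*}(x)\ge n+1-d(x)\ge p+1=d^{*}(v)+1$.

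This is a strict monovariant: from any vertex we can step to a leaf of one of its private stars and strictly increase $d^{*}$. Iterating yields vertices $v_0,v_1,v_2,\dots$ with $d^{*}(v_i)$ strictly increasing integers, contradicting $d^{*}(v)\le d(v)\le n-1$. This proves the lemma, and feeding it into the induction proves the theorem.

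I expect the crux to be locating the right monovariant — recognising that $d^{*}$ (rather than, say, degree or color degree) is the quantity to track, that a private star pushes its leaf sets pairwise apart, and that this separation caps a leaf's degree sharply enough to bump $d^{*}$ upward along the chain. Everything else — the vertex-deletion bookkeeping, the base case, and the degree count — should be routine once that observation is in hand.
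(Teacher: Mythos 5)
Your proof is correct. Note that your $d^{*}(v)$ is exactly what the paper calls the color saturated degree $d^{s}(v)=c(G)-c(G-v)$, and your induction skeleton (find a vertex with $d(v)+d^{s}(v)\le n$, delete it, and use $e(G)+c(G)=e(G-v)+c(G-v)+d(v)+d^{s}(v)$) is the same one underlying the paper's treatment of Lemmas~\ref{lemma_complete0} and~\ref{lemma_complete1}; the paper itself does not reprove Theorem~\ref{thm_rainbowK3} but imports it from \cite{LNXZ14}, whose method this machinery mirrors. Where you genuinely diverge is in ruling out the case that \emph{every} vertex satisfies $d(v)+d^{s}(v)\ge n+1$: the paper's route is to sum this inequality, invoke Lemma~\ref{lemma_sdegree} ($\sum_{v}d^{s}(v)\le 2c(G)$, with equality if and only if $G$ is rainbow), and then kill the rainbow case by triangle counting in the spirit of Rademacher/Mantel (this is exactly the move in Claim 1 of the proof of Lemma~\ref{lemma_complete1}, via Lemma~\ref{lemma_CountK3s}). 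You instead argue structurally: under the contradiction hypothesis each vertex has $p=d^{s}(v)\ge 2$ private colors, the leaf sets $S_1,\dots,S_p$ of these private stars admit no edges between distinct $S_i,S_j$ (such an edge would force its color to be $\gamma_i$ or $\gamma_j$, impossible since those colors live only at $v$), so a leaf $x\in S_1$ has $d(x)\le 1+(|S_1|-1)+|T|=n-1-\sum_{j\ge 2}|S_j|\le n-p$ and hence $d^{s}(x)\ge n+1-d(x)\ge p+1>d^{s}(v)$, a strict monovariant contradicting $d^{s}(x)\le d(x)\le n-1$. I checked the vertex-deletion identity, the disjointness and nonemptiness of the $S_i$, and the degree count; all are sound. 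Your closing argument is self-contained and more elementary --- it needs neither Lemma~\ref{lemma_sdegree} nor Mantel's theorem --- whereas the counting route has the advantage of meshing with the saturated-degree and rainbow-extremal bookkeeping that Section 2 reuses to characterize the extremal graphs.
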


The bound for $e(G)+c(G)$ in the above theorem is best
possible. To see this, let $\mathcal{G}_0$ be the set of all edge-colored
complete graphs which satisfy the following properties
(see Figure 1):
\begin{enumerate}
\item $K_1\in \mathcal{G}_0$;
\item For every $G\in \mathcal{G}_0$ of order
$n\geq2$, $c(G)=n-1$ and there is a bipartition
$V(G)=V_1\cup V_2$, such that $G[V_1,V_2]$ is
monochromatic and $G[V_i]\in \mathcal{G}_0$ for $i=1,2$.
\end{enumerate}
One can check that every graph in
$\mathcal{G}_0$ satisfies that
$e(G)+c(G)\geq n(n+1)/2-1$ but
contains no rainbow triangles.

In this paper we firstly characterize all the
graphs which satisfy $e(G)+c(G)\geq n(n+1)/2-1$
but contain no rainbow triangles. Our result
shows that all extremal graphs are included in $\mathcal{G}_0$.

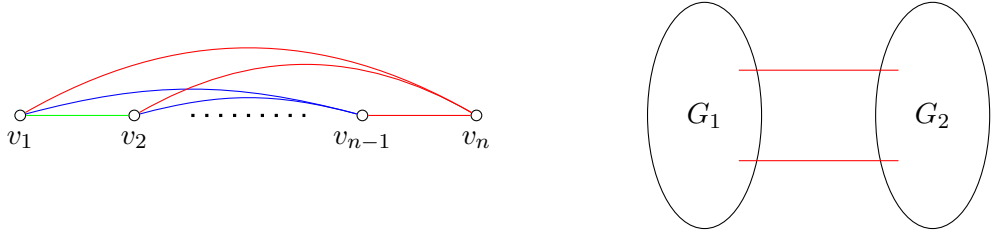
\begin{figure}[bht]
\centering
\begin{tikzpicture}[x=1.5cm, y=1.5cm]
\vertex (v1) at (-5,0) [label=below:$v_1$]{};
\vertex (v2) at (-4,0) [label=below:$v_2$]{};
\vertex (v3) at (-2,0) [label=below:$v_{n-1}$]{};
\vertex (v4) at (-1,0) [label=below:$v_{n}$]{};
	
\path (v1) edge[color=green] (v2);
\path (v1) edge[color=blue,bend left=15] (v3);
\path (v1) edge[color=red,bend left=30] (v4);
\path (v2) edge[color=blue,bend left=15] (v3);
\path (v2) edge[color=red,bend left=30] (v4);
\path (v3) edge[color=red] (v4);
	
\draw[loosely dotted, very thick] (-3.5,0) -- (-2.5,0);
	
\draw (1,0) ellipse (0.5 and 1) node  {$G_1$};
\draw (3,0) ellipse (0.5 and 1) node  {$G_2$};
	
\path (1.3,0.4) edge[color=red] (2.7,0.4);
\path (1.3,-0.4) edge[color=red] (2.7,-0.4);
\end{tikzpicture}
\caption{An example in $\mathcal{G}_0$ and the
 structure of graphs in $\mathcal{G}_0$ for $n\geq 2$.}
\end{figure}

\begin{thm}\label{theorem_G0}
Let $G$ be an edge-colored graph of order $n$.
If $e(G)+c(G)\geq \binom{n+1}{2}-1$ and $G$ contains
no rainbow triangles, then $G$ belongs to $\mathcal{G}_0$.
\end{thm}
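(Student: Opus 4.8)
The plan is to induct on $n$, using Theorem~\ref{thm_rainbowK3} in two ways: applied to $G$ it forces $e(G)+c(G)=\binom{n+1}{2}-1$ exactly (the hypothesis gives ``$\ge$'' and the absence of a rainbow $C_3$ gives ``$\le$''), and applied to vertex-deleted subgraphs it drives the induction. The base cases $n\le 2$ are immediate, since $K_1$ and $K_2$ with its single edge colored both lie in $\mathcal{G}_0$. For the inductive step, write $s(v)$ for the number of colors of $G$ all of whose edges meet $v$, so that $e(G-v)=e(G)-d(v)$ and $c(G-v)=c(G)-s(v)$. Since $G-v$ again has no rainbow $C_3$, Theorem~\ref{thm_rainbowK3} (or a trivial check if $n-1\le 2$) gives $e(G-v)+c(G-v)\le\binom{n}{2}-1$, whence
\[
d(v)+s(v)\ \ge\ \Big(\tbinom{n+1}{2}-1\Big)-\Big(\tbinom{n}{2}-1\Big)=n\qquad\text{for every }v\in V(G).
\]

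The first real step is to produce a vertex $v_0$ with $d(v_0)+s(v_0)=n$, so that $G-v_0$ is itself extremal for the smaller problem. To this end I would bound $\sum_v s(v)$: a color contributes $2$ to this sum when it is a single edge, $1$ when its edge set is a star with at least two edges, and $0$ otherwise, so $\sum_v s(v)\le 2c(G)$, with equality only if $G$ is rainbow. But a rainbow graph with no rainbow $C_3$ is triangle-free, so $e(G)\le\lfloor n^2/4\rfloor$, which contradicts $2e(G)=e(G)+c(G)=\binom{n+1}{2}-1$ for $n\ge 3$. Hence $\sum_v(d(v)+s(v))=2e(G)+\sum_v s(v)\le 2\big(e(G)+c(G)\big)-1<n(n+1)$, so some $v_0$ satisfies $d(v_0)+s(v_0)\le n$, and then (by the displayed bound) equality holds. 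Consequently $e(G-v_0)+c(G-v_0)=\binom{n}{2}-1=\binom{(n-1)+1}{2}-1$, and by the induction hypothesis $H:=G-v_0\in\mathcal{G}_0$; in particular $H\cong K_{n-1}$ and $c(H)=n-2$.

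Next I would prove $G$ is complete. If two distinct colors were private to $v_0$, they would appear on edges $v_0u_1$ and $v_0u_2$ with $u_1\ne u_2$; since $H$ is complete, $u_1u_2\in E(G)$, and its color, being borne by an edge avoiding $v_0$, differs from both, so $v_0u_1u_2$ would be a rainbow triangle. Hence $s(v_0)\le 1$, and together with $d(v_0)+s(v_0)=n$ and $d(v_0)\le n-1$ this gives $d(v_0)=n-1$ and $s(v_0)=1$; thus $v_0$ is adjacent to all of $V(H)$, $G\cong K_n$, and $c(G)=c(H)+1=n-1$. Now I would invoke the classical structure theorem for edge-colored complete graphs with no rainbow triangle: there is a partition of $V(G)$ into parts $Q_1,\dots,Q_k$, $k\ge 2$, such that $G$ is monochromatic between any two parts and at most two colors occur on edges joining distinct parts; writing $R$ for the resulting $K_k$, we have $c(R)\le 2$. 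Since $C(G)=C(R)\cup\bigcup_i C(G[Q_i])$ and $c(G[Q_i])\le |Q_i|-1$ (Theorem~\ref{thm_rainbowK3}, trivially if $|Q_i|\le 2$), we get
\[
n-1=c(G)\le c(R)+\sum_i(|Q_i|-1)\le 2+(n-k),
\]
so $k\le 3$; moreover equality forces $c(G[Q_i])=|Q_i|-1$ for every $i$, forces the sets $C(R),C(G[Q_1]),\dots,C(G[Q_k])$ to be pairwise disjoint, and forces $c(R)=2$ whenever $k=3$. Each $G[Q_i]$ is then complete with $e(G[Q_i])+c(G[Q_i])=\binom{|Q_i|+1}{2}-1$ and no rainbow $C_3$, hence $G[Q_i]\in\mathcal{G}_0$ by induction. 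If $k=2$, the bipartition $Q_1\cup Q_2$ with its monochromatic bridge, whose color is disjoint from $C(G[Q_1])\cup C(G[Q_2])$, is exactly the recursive description of a member of $\mathcal{G}_0$. If $k=3$, then $R\cong K_3$ carries two colors, so one color $\alpha$ lies on the two edges at some part $Q_1$ and another color $\beta$ on $Q_2Q_3$; the bipartition $Q_2\cup Q_3$ (bridge $\beta$) shows $G[Q_2\cup Q_3]\in\mathcal{G}_0$, and then the bipartition $Q_1\cup(Q_2\cup Q_3)$ (bridge $\alpha$) shows $G\in\mathcal{G}_0$.

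The step I expect to be the main obstacle is the extraction of the vertex $v_0$ with $d(v_0)+s(v_0)=n$: the averaging inequality above is tight, so it genuinely needs the auxiliary observation that $G$ cannot be rainbow, together with a careful count of $\sum_v s(v)$. After that the argument is comparatively mechanical — ``no rainbow $C_3$'' and the Gallai-type partition carry the rest — but one must still check, in each case, that the bridge color of the exhibited bipartition does not occur inside either side, which is precisely what the equality in the color count delivers and what the definition of $\mathcal{G}_0$ demands.
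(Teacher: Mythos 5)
Your proposal is correct and takes essentially the same route as the paper: a saturated-color-degree count yielding a vertex $v_0$ with $d(v_0)+d^s(v_0)=n$ (the content of Lemma~\ref{lemma_complete0}), hence completeness and $c(G)=n-1$, followed by the Gy\'arf\'as--Simonyi decomposition (Lemma~\ref{lemma_Gallaicoloring}), the color count forcing $k\le 3$ and pairwise disjoint color classes, and induction applied to the two sides of the resulting monochromatic bipartition. The only (harmless) deviation is your detour through ``$G$ is not rainbow'', which is not actually needed: the bound $\sum_v d^s(v)\le 2c(G)$ already gives $\sum_v\bigl(d(v)+d^s(v)\bigr)\le 2e(G)+2c(G)=n(n+1)-2<n(n+1)$, which is exactly how the paper extracts the vertex $v_0$.
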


In 1941, an extension of Mantel's theorem
was obtained by Rademacher in an unpublished manuscript (see \cite{E55}).
He proved that every graph $G$ on $n$ vertices contains at least
$\lfloor n/2\rfloor$ triangles if
$e(G)\geq \lfloor\frac{n^2}{4}\rfloor+1$. So, one may naturally
ask whether there is a rainbow version of Rademacher's theorem.
The following example shows that the answer is no.

Let $\mathcal{G}_1$ be the set of all edge-colored
complete graphs which satisfy the following properties:
\begin{enumerate}
\item The rainbow $C_3$ is included in $\mathcal{G}_1$;
\item For every $G\in \mathcal{G}_1$ of order
$n\geq 4$, $c(G)=n$ and there is a bipartition
$V(G)=V_1\cup V_2$, such that $G[V_1,V_2]$ is monochromatic
and $G_1=G[V_1]\in \mathcal{G}_1$, $G_2=G[V_2]\in \mathcal{G}_0$.
\end{enumerate}

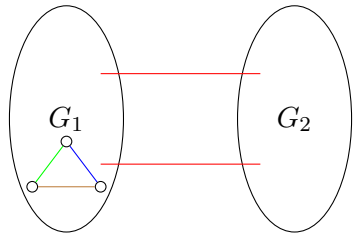
\begin{figure}[bht]
\centering
\begin{tikzpicture}[x=1.5cm, y=1.5cm]
\vertex (v1) at (1,-0.2) []{};
\vertex (v2) at (0.7,-0.6) []{};
\vertex (v3) at (1.3,-0.6) []{};
\path (v1) edge[color=green] (v2);
\path (v1) edge[color=blue] (v3);
\path (v2) edge[color=brown] (v3);
%
%
\draw (1,0) ellipse (0.5 and 1) node  {$G_1$};
\draw (3,0) ellipse (0.5 and 1) node  {$G_2$};
\path (1.3,0.4) edge[color=red] (2.7,0.4);
\path (1.3,-0.4) edge[color=red] (2.7,-0.4);
\end{tikzpicture}
\caption{The structure of graphs in $\mathcal{G}_1$ for $n\geq 4$.}
\end{figure}

Generally, let $\mathcal{G}_k$ $(k\geq 2)$ be the set of all edge-colored complete graphs
constructed as follows: For every $G\in \mathcal{G}_k$ of order $n\geq 3k$,
there is a bipartition $V(G)=V_1\cup V_2$, such that $G[V_1,V_2]$ is monochromatic and $G[V_1]\in \mathcal{G}_i$,
$G[V_2]\in \mathcal{G}_{k-i}$ for some $0\leq i\leq k$. It is easy to see that for every $G\in \mathcal{G}_k$,
$e(G)=c(G)=n(n-1)+(k-1)$ and $G$ contains exactly $k$ rainbow triangles. For $k=1$, we can show $\mathcal{G}_1$ is
exactly the set of graphs which satisfy such properties.

\begin{thm}\label{theorem_G1}
Let $G$ be an edge-colored graph of order $n\geq 3$.
If $e(G)+c(G)\geq\binom{n+1}{2}$ and $G$ contains
exactly one rainbow triangle, then $G$ belongs to
$\mathcal{G}_1$.
\end{thm}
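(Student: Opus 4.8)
The plan is to prove Theorem~\ref{theorem_G1} by induction on $n$. The base case $n=3$ is immediate: the only triangle of $G$ is $G$ itself, so the existence of a rainbow triangle forces $G$ to be a rainbow $C_3$, which lies in $\mathcal{G}_1$ by definition. For the inductive step, fix $n\geq 4$ and let $T=xyz$ be the unique rainbow triangle of $G$, with $C(xy)=\alpha$, $C(yz)=\beta$, $C(xz)=\gamma$ pairwise distinct. The driving observation is that for every $v\in V(T)$ the graph $G-v$ is rainbow\nobreakdash-triangle\nobreakdash-free (any rainbow triangle of $G-v$ would be $T$, which uses $v$), so Theorem~\ref{theorem_G0} will apply to $G-v$ as soon as we know $e(G-v)+c(G-v)$ is large enough. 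Throughout I write $\Delta c_v$ for the number of colors all of whose edges meet $v$, so that $c(G)=c(G-v)+\Delta c_v$ and $e(G-v)+c(G-v)=e(G)+c(G)-d(v)-\Delta c_v$.

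The technical core is to show that $G$ is complete, that $c(G)=n$ (hence $e(G)+c(G)=\binom{n+1}{2}$ exactly), and that $\Delta c_v=2$ for each $v\in V(T)$, the two private colors being precisely the (necessarily single\nobreakdash-edge) colors of the two edges of $T$ at $v$, while $\Delta c_v\leq 1$ for $v\notin V(T)$. The clean part is the bound $\Delta c_v\leq 2$ for $v\in V(T)$ once completeness is known: if $\sigma\neq\sigma'$ were both private to $v$, choose edges $vw$ colored $\sigma$ and $vw'$ colored $\sigma'$; since $\sigma,\sigma'$ cannot appear on $ww'$, the triangle $vww'$ would be rainbow, hence equal to $T$, forcing $\{w,w'\}=\{y,z\}$; letting $w,w'$ range shows the private colors of $v$ can only be $\alpha$ and $\gamma$ (the pair at $v$), each a single edge, and no third one is possible. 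Proving that $G$ is actually complete is the delicate step, and I expect it to be the main obstacle; it parallels (and should reuse) the argument in the proof of Theorem~\ref{theorem_G0} that extremal rainbow\nobreakdash-triangle\nobreakdash-free graphs are complete — roughly, a non\nobreakdash-edge $ab$ with $a\notin V(T)$ can be added in a carefully chosen color keeping the rainbow\nobreakdash-triangle count equal to one while raising $e+c$ above $\binom{n+1}{2}$, which is incompatible with Theorem~\ref{theorem_G0} applied to $G-x$ for the three $x\in V(T)$. Granted completeness, $d(v)+\Delta c_v\leq (n-1)+2=n+1$ for $v\in V(T)$, so $e(G-v)+c(G-v)\geq\binom{n+1}{2}-(n+1)=\binom{n}{2}-1$ and Theorem~\ref{theorem_G0} yields $G-v\in\mathcal{G}_0$; chasing equalities then forces $e(G)+c(G)=\binom{n+1}{2}$, $d(v)=n-1$, $\Delta c_v=2$, and that $\alpha,\beta,\gamma$ are colors of single edges.

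It remains to extract the bipartition. Fix $x\in V(T)$ and set $H:=G-x\in\mathcal{G}_0$; thus $H$ is complete, $c(H)=n-2$, and there is a bipartition $V(H)=W_1\cup W_2$ with $H[W_1],H[W_2]\in\mathcal{G}_0$ and $H[W_1,W_2]$ monochromatic in a color $\rho$. Since $\beta$ occurs only on $yz$ while $\rho$ occurs on every $W_1$--$W_2$ edge, $\beta\neq\rho$ (equality would force $|W_1|=|W_2|=1$, i.e.\ $n=3$), so $y$ and $z$ lie in the same part, say $W_1$; put $V_1:=W_1\cup\{x\}$ and $V_2:=W_2$. Using the triangles $xuw$ with $u\in W_1$, $w\in W_2$ (whose colors are $C(xu),\rho,\rho$ when $C(uw)=\rho$) together with the fact that $\alpha$ occurs only on $xy$, one shows that every edge from $x$ to $W_2$ also has color $\rho$, so $G[V_1,V_2]$ is monochromatic, and $G[V_2]=H[W_2]\in\mathcal{G}_0$. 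Finally, from $c(G)=n$, $c(G[V_2])=|V_2|-1$ and $c(G)\leq c(G[V_1])+c(G[V_2])+1$ we get $c(G[V_1])\geq|V_1|$, hence $e(G[V_1])+c(G[V_1])\geq\binom{|V_1|}{2}+|V_1|=\binom{|V_1|+1}{2}$; as $G[V_1]$ is complete, has order $|V_1|\geq 3$, and contains exactly one rainbow triangle (namely $T$), the induction hypothesis gives $G[V_1]\in\mathcal{G}_1$. This is exactly the recursive description of $\mathcal{G}_1$, so $G\in\mathcal{G}_1$.

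To summarize: the only genuinely hard ingredient is establishing that $G$ is complete (equivalently, ruling out $\Delta c_v\geq 3$ for $v\in V(T)$ without presupposing completeness); once that is secured, the rest is a combination of the private\nobreakdash-color lemma, an invocation of Theorem~\ref{theorem_G0} on $G-x$, and edge/color bookkeeping — where one must be careful that every size inequality used is tight, since this is what upgrades the conclusion from an approximate to the exact recursive structure of $\mathcal{G}_1$.
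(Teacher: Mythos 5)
The decisive step of your plan --- that $G$ must be complete (with $c(G)=n$) --- is exactly the point you defer, and the sketch you give for it does not work as stated. You propose to insert a missing edge $ab$ (with $a\notin V(T)$) ``in a carefully chosen color keeping the rainbow-triangle count equal to one,'' but there is no guarantee that such a color exists: for every common neighbor $w$ of $a$ and $b$ with $C(aw)\neq C(bw)$ you are forced to color $ab$ with $C(aw)$ or $C(bw)$, and two common neighbors whose color pairs are disjoint already exclude every possible choice, so adding $ab$ may unavoidably create a second rainbow triangle; in that case your argument produces no contradiction with the hypotheses on $G$. Moreover, the contradiction you invoke (``incompatible with Theorem~\ref{theorem_G0} applied to $G-x$ for the three $x\in V(T)$'') is circular: to apply Theorem~\ref{theorem_G0} to $G-x$ you need a lower bound on $e(G-x)+c(G-x)$, i.e.\ an upper bound $d(x)+d^s(x)\leq n+1$, which is precisely the degree/saturation information you only obtain \emph{after} completeness. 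In the paper this step is a separate lemma (Lemma~\ref{lemma_complete1}) with its own induction on $n$: first $e(G)+c(G)=\binom{n+1}{2}$ is forced by deleting an edge of the unique rainbow triangle and invoking Theorem~\ref{theorem_G0}; then $d(v)+d^s(v)\geq n+1$ for triangle vertices and $\geq n$ for the others; and, crucially, Rademacher's theorem (Lemma~\ref{lemma_CountK3s}) is used to show $G$ is not rainbow, which via the saturated-degree sum lemma yields a vertex $u$ outside the triangle with $d(u)+d^s(u)=n$, so that induction makes $G-u$ complete and $d^s(u)\geq2$ would create a second rainbow triangle, giving $d(u)=n-1$. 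None of this machinery (in particular the Rademacher ingredient) appears in your outline, so the proposal has a genuine gap exactly at its load-bearing step.

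Granted completeness, the rest of your argument is essentially correct and takes a mildly different route from the paper: your ``private color'' analysis at the triangle vertices reproduces the paper's conclusion that the three triangle colors occur only once, and you then apply Theorem~\ref{theorem_G0} to $G-x$ for a triangle vertex $x$ and extend its $\mathcal{G}_0$-bipartition to $G$ (checking that $x$ sends only the crossing color to $W_2$), whereas the paper recolors $v_1v_2$ with $C(v_1v_3)$ and applies Theorem~\ref{theorem_G0} to the recolored graph on all $n$ vertices before splitting off $V_2$. Both endgames close the induction through $c(G[V_1])=|V_1|$, so your final section is a legitimate alternative; the missing piece is solely the completeness lemma.
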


Aside from the color number condition in Theorem 1, Li et al. \cite{LNXZ14}
also considered a Dirac-type color degree condition for the existence of rainbow
triangles in edge-colored graphs.

\begin{thm}[Li, Ning, Xu, and Zhang \cite{LNXZ14}]\label{theorem_RainbowC_3}
Let $G$ be an edge-colored graph of order $n\geq 5$. If $d^{c}(v)\geq n/2$ for every
vertex $v\in V(G)$ and $G$ contains no rainbow $C_3$, then the underlying graph
of $G$ is $K_{n/2,n/2}$, where $n$ is even.
\end{thm}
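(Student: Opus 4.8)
The plan is to reduce the statement to the assertion that the underlying graph of $G$ is triangle-free, which then pins $G$ down to $K_{n/2,n/2}$ via a counting argument and Mantel's theorem, and then to prove triangle-freeness by a local analysis around a hypothetical triangle.

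\textbf{Step 1: reduction to triangle-freeness.} For a color $i$ let $G_i$ be the subgraph of $G$ formed by the edges of color $i$, with $e_i=e(G_i)$ edges and $n_i$ non-isolated vertices; since a graph with no isolated vertex has at most twice as many vertices as edges, $n_i\le 2e_i$. Counting vertex--color incidences in two ways,
\[
\sum_{v\in V(G)}d^c(v)=\sum_i n_i\le\sum_i 2e_i=2e(G).
\]
As $d^c(v)$ is an integer, $d^c(v)\ge n/2$ means $d^c(v)\ge\lceil n/2\rceil$, so $e(G)\ge\tfrac12 n\lceil n/2\rceil\ge\lfloor n^2/4\rfloor$, strictly when $n$ is odd. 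Suppose $G$ is triangle-free. If $n$ is odd then $e(G)>\lfloor n^2/4\rfloor$ while $K_{\lceil n/2\rceil,\lfloor n/2\rfloor}$ has exactly $\lfloor n^2/4\rfloor$ edges, so Mantel's theorem produces a triangle, a contradiction; hence $n$ is even. If $n$ is even then $e(G)\ge n^2/4=\lfloor n^2/4\rfloor$, and the equality case of Mantel's theorem forces $G\cong K_{n/2,n/2}$. So it suffices to prove: \emph{if $\delta^c(G)\ge n/2$ and $G$ has no rainbow $C_3$, then $G$ is triangle-free.}

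\textbf{Step 2: the rainbow-star tool.} Assume $G$ has a triangle; being non-rainbow, it has an \emph{apex} $x$, a vertex on two of its edges of a common color, and we write $C(xy)=C(xz)=a$ for this triangle. For any vertex $u$, choosing one incident edge of each color of $CN(u)$ gives a rainbow star $uu_1,\dots,uu_k$ with distinct colors $c_1,\dots,c_k$, where $k=d^c(u)$; put $N^*=\{u_1,\dots,u_k\}$ and $W=V(G)\setminus(\{u\}\cup N^*)$, so $|W|=n-1-d^c(u)\le n/2-1$ (this smallness is the key point). For an edge $u_iu_j$ of $G[N^*]$ the triangle $uu_iu_j$ is non-rainbow, so its color is $c_i$ or $c_j$; orient $u_i\to u_j$ when it is $c_j$. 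Since $CN(u_i)\subseteq\{c_i\}\cup\{c_j:u_i\to u_j\}\cup\{C(u_iw):w\in W\cap N(u_i)\}$, we get $d^c(u_i)\le 1+d^+(u_i)+|W|$, where $d^+$ is out-degree in this orientation; summing over $i$ gives $e(G[N^*])\ge d^c(u)\bigl(d^c(u)-n/2\bigr)$, and moreover $d^+(u_i)\ge d^c(u)-n/2$ for every $i$. Finally, a directed triangle $u_i\to u_j\to u_\ell\to u_i$ of this orientation \emph{is} a rainbow $C_3$, its edges having the distinct colors $c_j,c_\ell,c_i$.

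\textbf{Step 3: finishing, and the main obstacle.} If some vertex $u$ has $d^c(u)>n/2$, then the orientation of $G[N^*]$ has minimum out-degree at least $1$, hence a directed cycle; a shortest such cycle of length $3$ is a rainbow $C_3$ by Step 2, and for a longer shortest cycle I would use that the orientation is transitive ``modulo rainbow triangles'' — if $u_i\to u_j\to u_\ell$ with $i,j,\ell$ distinct and $u_iu_\ell\in E(G)$, then $u_iu_ju_\ell$ is rainbow or $u_i\to u_\ell$ — to shorten the cycle, a contradiction in either case. The remaining and hardest situation is the tight one, where every color degree equals $n/2$ and Step 2 carries no slack; here the plan is to center the rainbow star at the apex $x$ and take $xy$ as the representative of color $a$, which forces $z\in W$ with $|W|\le n/2-1$, and then to play $d^c(z)\ge n/2$ against the no-rainbow-$C_3$ constraints imposed on the common neighbors of $x,y,z$ by the edges $xy$, $xz$, $yz$ (each such common neighbor forces a coincidence of colors), together with the analogous constraints at $x$, $y$ and these common neighbors, until the room available in $W$ is exhausted. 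I expect this last part — a case analysis on whether the triangle is monochromatic or has exactly two equal colors, and on how colors repeat on the edges from $\{x,y,z\}$ to the rest of $G$ — to be where essentially all of the difficulty lies.
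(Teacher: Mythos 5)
You should first note that the paper does not prove this statement at all: it is quoted from \cite{LNXZ14} and used as a black box (e.g.\ in the proof of Theorem~\ref{theorem_colorneighbor_C_3}), so there is no in-paper proof to compare with and your argument must stand on its own. Its first two steps do: the incidence count $\sum_{v}d^c(v)=\sum_i n_i\le 2e(G)$ combined with Mantel's theorem correctly reduces the statement to showing that the underlying graph is triangle-free, and the rainbow-star orientation, the bound $d^+(u_i)\ge d^c(u_i)-1-|W|\ge d^c(u)-n/2$, and the observation that a directed triangle of this orientation is a rainbow $C_3$ are all correct.

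The gap is in Step 3, in both halves. In the case $d^c(u)>n/2$, your cycle-shortening needs the chord $u_iu_\ell$ to be an edge of $G$; but $G[N^*]$ need not be complete (in the extremal situation it is very far from complete), so a shortest directed cycle may have length at least $4$ and be chordless, and then it only produces a rainbow $C_4$ or longer, which the hypothesis does not forbid. Minimum out-degree $1$ in an oriented graph does not force a directed triangle, so this case is not actually closed. More importantly, the tight case in which every color degree equals exactly $n/2$ --- precisely the regime of the extremal configuration, and hence where the entire content of the theorem sits --- is not proved: you only outline a plan (``play $d^c(z)\ge n/2$ against the constraints \ldots until the room in $W$ is exhausted'') and you yourself flag it as the place where ``essentially all of the difficulty lies.'' As written, the proposal is a correct reduction plus a correct tool, but the core argument deriving a contradiction from the presence of a triangle is missing in both the slack and the no-slack cases, so the proof is incomplete.
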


Returning to related topics in uncolored graphs, let us recall the
Ore-type condition, that is, the condition in terms of the minimum degree
sum of non-adjacent vertices in a graph (see e.g. \cite{West}).
This kind of condition was introduced as an extension of the minimum
degree condition for cycles, thereby yielding affluent results
in this area. Motivated by this, when we try to
consider some natural extensions from the minimum
color degree condition in edge-colored graphs, what
kind of color degree condition would be appropriate?

Perhaps the following theorem due to Broersma et al. \cite{BLWZ05}
gives us a reasonable answer to this question.

\begin{thm}[Broersma, Li, Woeginger, and Zhang \cite{BLWZ05}]\label{BLWZ}
Let $G$ be an edge-colored graph of order
$n\geq 4$ such that $|CN(u)\cup CN(v)|\geq n-1$
for every pair of vertices $u$ and $v$ in
$V(G)$. Then $G$ contains a rainbow $C_3$ or
a rainbow $C_4$.
\end{thm}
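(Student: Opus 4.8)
The plan is to argue by contradiction: suppose $G$ contains neither a rainbow $C_3$ nor a rainbow $C_4$, and derive a contradiction with the neighbourhood-union hypothesis. First I would note that $d^c(u)+d^c(v)\ge |CN(u)\cup CN(v)|\ge n-1$ for all distinct $u,v$, so at most one vertex has color degree below $\lceil(n-1)/2\rceil$ and any vertex $v_0$ of maximum color degree $k:=d^c(v_0)$ satisfies $\lceil(n-1)/2\rceil\le k\le n-1$. Fix a rainbow star at $v_0$: neighbours $u_1,\dots,u_k$ for which the colors $c_i:=C(v_0u_i)$ are pairwise distinct and $\{c_1,\dots,c_k\}=CN(v_0)$; set $W:=V(G)\setminus\{v_0,u_1,\dots,u_k\}$, so that $|W|=n-1-k$.

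Next I would read off the local structure near $v_0$. Since there is no rainbow triangle through $v_0$, each edge $u_iu_j$ of the leaf graph $H:=G[\{u_1,\dots,u_k\}]$ has color $c_i$ or $c_j$; orienting such an edge toward the endpoint whose color it uses, one checks that the absence of rainbow triangles and rainbow $4$-cycles inside $H$ forces this orientation to contain no directed triangle and no directed $4$-cycle, while every $4$-cycle of the form $v_0u_au_bu_c$ is automatically non-rainbow. I would then split the argument according to whether $W$ is empty.

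Consider first $W=\emptyset$, i.e.\ $k=n-1$. Here every edge at a leaf uses a color from $\{c_1,\dots,c_{n-1}\}$, so $CN(u_i)\cup CN(u_j)\subseteq\{c_1,\dots,c_{n-1}\}$, and the hypothesis forces $CN(u_i)\cup CN(u_j)=\{c_1,\dots,c_{n-1}\}$ for all $i\ne j$. Reading this off, for every index $l$ and every pair $i,j\notin\{l\}$ the color $c_l$ must appear on $u_lu_i$ or on $u_lu_j$; hence $u_l$ is joined to all but at most one of the other leaves by an edge of color $c_l$. Starting from an edge $u_1u_2$, say of color $c_1$, this ``own-colour'' property propagates: $u_2u_l=c_2$ for every $l\ge 3$, while $u_1$ still sends $c_1$ to all but at most one leaf. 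Choosing $l^{*}\ge 3$ with $u_1u_{l^{*}}=c_1$, the leaf $u_{l^{*}}$ then fails to send $c_{l^{*}}$ to both $u_1$ and $u_2$, one exception too many. The degenerate subcases (a non-edge $u_1u_2$, and $n=4$) are handled the same way, sometimes yielding a rainbow $C_3$ directly, so $W=\emptyset$ is impossible.

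The real difficulty is the case $W\ne\emptyset$, where a leaf may see colors outside $CN(v_0)$ on its edges to $W$. Here I would feed the union hypothesis into the pairs $\{v_0,w\}$, $\{u_i,w\}$ and $\{w,w'\}$ with $w,w'\in W$: since all edges at $v_0$ use the $k$ colors of $CN(v_0)$, each $w\in W$ must contribute at least $|W|$ colors outside $CN(v_0)$, forcing some of its edges to the leaves to carry such colors, and inspecting the candidate rainbow $4$-cycles $v_0u_jwu_iv_0$ (where $C(v_0w)=c_i$) should pin down the colors between the leaves and $W$. The cleanest way to organize this is probably induction on $n$, with $n=4$ as a finite base case: if some vertex $x$ can be deleted so that $G-x$ still meets the hypothesis we are done by induction; otherwise every vertex is ``critical'' for a pair whose neighbourhood-union barely reaches $n-1$, and this extra rigidity, combined with the $W=\emptyset$ analysis applied to a suitable induced part, should close the argument. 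I expect the genuinely delicate point to be ruling out the near-regular colourings in which every color degree is roughly $n/2$: there Theorem~\ref{theorem_RainbowC_3} is the natural weapon when $n$ is even, and the odd case will need a small separate twist.
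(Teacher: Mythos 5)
You should first note that the paper itself does not prove Theorem~\ref{BLWZ}: it is quoted from \cite{BLWZ05}, so your attempt has to stand on its own — and as written it has a genuine gap. Your argument is actually complete only in the case $W=\emptyset$, i.e.\ when some vertex has colour degree $n-1$: there the observation that every leaf edge $u_iu_j$ carries $c_i$ or $c_j$, hence $CN(u_i)\cup CN(u_j)\subseteq\{c_1,\dots,c_{n-1}\}$ with equality, and the ``own-colour to all but one leaf'' propagation do give a contradiction for $n\geq 5$ (the $n=4$ subcase is only waved at). But the heart of the theorem is the case $W\neq\emptyset$, and for that you offer a plan, not a proof: the steps ``inspecting the candidate rainbow $4$-cycles \dots should pin down the colours between the leaves and $W$'', ``should close the argument'', ``will need a small separate twist'' are precisely the parts that have to be carried out, and they constitute the bulk of the Broersma--Li--Woeginger--Zhang argument.

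Moreover, the two tools you propose for that case do not obviously work. The vertex-deletion induction fails at the hypothesis: removing a vertex $x$ lowers the threshold from $n-1$ to $n-2$, but $|CN_{G-x}(u)\cup CN_{G-x}(v)|$ can drop by $2$ (each of $u,v$ may lose a colour that appears only on its edge to $x$), so a vertex whose deletion preserves the hypothesis need not exist, and the complementary ``every vertex is critical'' branch is not developed at all. Appealing to Theorem~\ref{theorem_RainbowC_3} is also not immediate: it needs $d^c(v)\geq n/2$ for \emph{every} vertex, whereas your pair condition only guarantees that at most one vertex falls below $\lceil (n-1)/2\rceil$, which is weaker for odd $n$ and after any deletion; and even where it applies, its conclusion is merely that the underlying graph is $K_{n/2,n/2}$, so you would still need a separate argument to produce a rainbow $C_4$ in an edge-coloured complete bipartite graph satisfying the pair condition. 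Until the $W\neq\emptyset$ case is actually executed, the proof is incomplete.
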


Unlike Ore-type conditions in uncolored graphs, we
look at every pair of vertices in the edge-colored graph $G$
under the assumption of Theorem \ref{BLWZ}.
This is because we need to deal with the case that $G$ is
an edge-colored complete graph, and even in this special
case, problems for finding rainbow cycles are far from trivial
in general (unlike the uncolored version). An example is a theorem
by Li et al. \cite{LNZ16} which states that an edge-colored graph
on $n$ vertices contains a rainbow triangle if the color
degree sum of every two adjacent vertices is at least $n+1$.

Motivated by Theorem~\ref{BLWZ}, one may naturally ask whether we can find both
a rainbow $C_3$ and a rainbow $C_4$ under the same condition.
The following theorems answer the above
question affirmatively in some sense.

\begin{thm}\label{theorem_colorneighbor_C_4}
Let $k$ be a positive integer, and $G$ an edge-colored
graph of order $n\geq 105k-24$ such that $|CN(u)\cup CN(v)|\geq n-1$
for every pair of vertices $u$ and $v$ in $V(G)$.
Then $G$ contains $k$ rainbow $C_4$'s.
\end{thm}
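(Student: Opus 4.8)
The plan is to find the $k$ rainbow $C_4$'s one at a time, by repeatedly applying the qualitative statement of Theorem~\ref{BLWZ} to a large "clean" piece of $G$ and then deleting a bounded number of vertices so that the remaining graph still satisfies the color-neighborhood hypothesis with respect to its own (smaller) order. The basic engine is the observation that Theorem~\ref{BLWZ} produces a rainbow $C_3$ \emph{or} a rainbow $C_4$; so the real work is (i) converting a rainbow $C_3$ into a rainbow $C_4$ using the surplus of colors in the large graph, and (ii) showing that after removing the four vertices of one rainbow $C_4$ we can still restore the hypothesis for the next round. Since we only need to survive $k$ rounds and each round costs a bounded number of vertices, the linear lower bound $n\geq 105k-24$ is exactly the kind of budget that makes this induction go through.

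First I would set up the induction on $k$. For $k=1$ the statement is essentially Theorem~\ref{BLWZ} together with an upgrade lemma: if $G$ has order $n$, satisfies $|CN(u)\cup CN(v)|\ge n-1$ for all pairs, and contains a rainbow triangle $T=xyz$, then (for $n$ large) $G$ contains a rainbow $C_4$. To prove this upgrade lemma, note that each vertex $w\notin V(T)$ sees at least $n-1-d^c(w)$... more usefully, for a fixed vertex $x\in V(T)$ and any $w\notin V(T)$ we have $|CN(x)\cup CN(w)|\ge n-1$, so $CN(w)$ is large whenever $CN(x)$ is small, and in either case there are many vertices $w$ outside $T$ joined to $T$ by an edge whose color avoids the (at most three) colors on $T$; picking such a $w$ adjacent to two vertices of $T$, say $x$ and $y$, with the two edges $wx,wy$ receiving colors distinct from each other and from $C(xy)$, yields the rainbow $C_4$ $w x z y w$ provided the relevant colors are all distinct — a short case analysis on which of the three triangle-colors could coincide with $C(wx),C(wy)$ finishes it. The counting here is where the constant $105$ (rather than something smaller) gets used: one must guarantee enough outside vertices survive the forbidden-color restrictions for \emph{every} choice of pair from $T$.

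For the inductive step, suppose $G$ has order $n\ge 105k-24$ and the hypothesis holds. Apply the $k=1$ case to get one rainbow $C_4$, call it $Q$, on vertex set $S$ with $|S|=4$. I would then pass to $G':=G-S$, which has order $n':=n-4\ge 105(k-1)-24+\big(105-4\big)-\text{(slack)}$ — so the order bound for parameter $k-1$ is comfortably met — but the color-neighborhood condition must be re-examined: deleting $4$ vertices can drop each $|CN(u)|$ by at most $4$, so we only get $|CN_{G'}(u)\cup CN_{G'}(v)|\ge n-1-8 = n'-5$, which is weaker than the needed $n'-1$. The fix, and the main obstacle, is to choose the rainbow $C_4$'s more carefully: rather than deleting arbitrary vertices, one shows that among the abundance of rainbow $C_4$'s there is one whose vertices have small color degree, or alternatively one works with a common core and only ever deletes vertices from a fixed small "reservoir" set. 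Concretely I expect the right bookkeeping is to not re-prove the hypothesis for $G'$ but instead to track a single invariant: maintain a set $U$ of already-used vertices with $|U|\le 4k$, and at each step observe that for any two vertices $u,v$ outside $U$, $|CN_{G-U}(u)\cup CN_{G-U}(v)|\ge n-1-|U|\ge (n-|U|)-1 = |V(G-U)|-1$, so the hypothesis is \emph{preserved verbatim} for $G-U$ — the earlier naive estimate was too lossy because it double-counted. Hence each of the $k$ applications is legitimate as long as $|V(G-U)|\ge 105\cdot 1 - 24 = 81$ throughout, i.e. as long as $n-4(k-1)\ge 81$, which is implied by $n\ge 105k-24$ with huge room to spare; the generous constant is there precisely to absorb the loss in the upgrade lemma, not the deletion. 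The main obstacle, then, is really just the $k=1$ upgrade lemma and its constant; the induction itself is a clean peeling argument once one notices that $|CN_{G-U}(u)\cup CN_{G-U}(v)|\ge n-1-|U|$ already equals $|V(G-U)|-1$.
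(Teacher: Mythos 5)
Your peeling induction rests on a false invariant. You claim that deleting a set $U$ of used vertices decreases $|CN(u)\cup CN(v)|$ by at most $|U|$, dismissing the factor $2$ as ``double-counting''; but there is no double-counting to exploit. A single deleted vertex $w$ can remove \emph{two distinct} colors from the union: the color $C(uw)$ may appear at $u$ only on the edge $uw$ and nowhere at $v$, while $C(vw)$ may appear at $v$ only on $vw$ and nowhere at $u$. So the correct guarantee after deleting $U$ is only $|CN_{G-U}(u)\cup CN_{G-U}(v)|\geq n-1-2|U|$, whereas the hypothesis you need for the next round is $\geq |V(G-U)|-1=n-|U|-1$; the shortfall is up to $|U|$, grows with $k$, and nothing in your argument recovers it. Hence the hypothesis of the theorem (or of Theorem~\ref{BLWZ}) is not preserved under deletion, and the induction collapses. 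In addition, your base case hinges on an unproved ``upgrade lemma'' (rainbow $C_3$ plus the color-neighborhood condition implies a rainbow $C_4$ for large $n$): the condition $|CN(x)\cup CN(w)|\geq n-1$ says nothing about which edges realize those colors, so it does not by itself supply vertices $w$ outside the triangle joined to two triangle vertices by suitably colored edges; the ``short case analysis'' is exactly the hard content and is missing.

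For comparison, the paper avoids deletion altogether: it splits on the minimum color degree $t=\delta^c(G)$. If $t$ is very large it uses Lemma~\ref{Lemma_FiveVertices} (every $5$ vertices of a complete graph in which incident edges get distinct colors span a rainbow $C_4$), either directly or on a complete ``good'' part of $G-N[u]$; if $t\geq n-6k+1$ it passes to a spanning bipartite (hence triangle-free) subgraph via Lemma~\ref{Lemma_spabip}, which keeps the color degree at least $n/3+k$, and then invokes Lemma~\ref{Lemma_DisRainC4} (built on the \v{C}ada--Kaneko--Ryj\'a\v{c}ek--Yoshimoto result) to produce $k$ rainbow $C_4$'s at once. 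Note also that the theorem asks only for $k$ rainbow $C_4$'s, not vertex-disjoint ones, so the global counting route is available and the paper uses it; your disjointness-by-deletion strategy is both stronger than needed and, as it stands, unsound.
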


\begin{thm}\label{theorem_colorneighbor_C_3}
Let $G$ be an edge-colored graph of order
$n\geq 6$ such that $|CN(u)\cup CN(v)|\geq n-1$
for every pair of vertices $u$ and $v$ in
$V(G)$. Then $G$ contains a rainbow $C_3$ unless $G$
is a rainbow $K_{\lceil n/2\rceil,\lfloor n/2\rfloor}$.
\end{thm}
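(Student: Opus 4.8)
The plan is to assume that $G$ contains no rainbow $C_3$ and to show that then $G$ must be a rainbow $K_{\lceil n/2\rceil,\lfloor n/2\rfloor}$ (if $G$ does contain a rainbow $C_3$ there is nothing to prove, since a rainbow complete bipartite graph has no triangle at all). I would first pin down the color degrees. Applying the hypothesis to the two vertices of smallest color degree shows that at most one vertex $u$ can satisfy $d^c(u)\le\lfloor n/2\rfloor-1$, and that if such a $u$ exists then $d^c(w)\ge\lceil n/2\rceil$ for every $w\neq u$. The first real task is to rule out this exceptional vertex: using the partition of $N(u)$ into its at most $\lfloor n/2\rfloor-1$ color classes, between any two of which every edge carries one of the two class colors because $G$ has no rainbow $C_3$, together with the large color degrees of the other vertices and the rainbow $C_4$ furnished by Theorem~\ref{BLWZ}, one forces a rainbow $C_3$, a contradiction. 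Hence $\delta^c(G)\ge\lfloor n/2\rfloor=\lceil(n-1)/2\rceil$, which equals $n/2$ when $n$ is even.

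Next I would show that the underlying graph of $G$ is $K_{\lceil n/2\rceil,\lfloor n/2\rfloor}$ and that each vertex of its larger part meets only distinctly colored edges. When $n$ is even this is immediate from Theorem~\ref{theorem_RainbowC_3}: since $\delta^c(G)\ge n/2$ and $G$ has no rainbow $C_3$, its underlying graph is $K_{n/2,n/2}$, and then $\deg(v)=n/2=d^c(v)$ for every $v$, so every vertex is rainbow. When $n$ is odd I would instead prove that $G$ is triangle-free. For a putative triangle $xyz$, the common color of each of its three edges lies in the color neighborhoods of both endpoints, so $d^c(x)+d^c(y)=|CN(x)\cup CN(y)|+|CN(x)\cap CN(y)|\ge n$ and similarly for the other two pairs; combined with $\delta^c(G)\ge(n-1)/2$ this forces at least two vertices of the triangle to have color degree $\ge\lceil n/2\rceil$. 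On the other hand, if \emph{every} vertex of $G$ had color degree $\ge\lceil n/2\rceil$, then Theorem~\ref{theorem_RainbowC_3} would make the underlying graph $K_{n/2,n/2}$, impossible for odd $n$; hence some vertex $v_0$ has $d^c(v_0)=(n-1)/2$, and then every vertex other than $v_0$ uses at least $(n-1)/2$ colors outside $CN(v_0)$. A case analysis on the at most two colors on $xyz$ and on the colors of the edges from $x,y,z$ to $v_0$ and to the color classes at $v_0$, always using that edges between two color classes at a vertex receive one of the two class colors, then produces either a rainbow $C_3$ or two vertices violating $|CN(\cdot)\cup CN(\cdot)|\ge n-1$. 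So $G$ is triangle-free; since $\delta(G)\ge\delta^c(G)\ge(n-1)/2>2n/5$ for $n\ge6$, the theorem of Andr\'asfai, Erd\H{o}s and S\'os makes $G$ bipartite, with parts $X$ and $Y$, $|X|\ge|Y|$. Then $\deg(y)\le|X|$ and $\deg(x)\le|Y|$ for $x\in X$, $y\in Y$, together with $\delta(G)\ge(n-1)/2$, force $(|X|,|Y|)=(\lceil n/2\rceil,\lfloor n/2\rfloor)$, and $d^c(x)\ge(n-1)/2=|Y|\ge\deg(x)$ then forces every $x\in X$ to be joined to all of $Y$ by distinctly colored edges. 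Thus $G=K_{\lceil n/2\rceil,\lfloor n/2\rfloor}$, with every vertex of $X$ rainbow (and every vertex rainbow if $n$ is even).

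It remains to upgrade the coloring to a rainbow one. Suppose a color $\gamma$ appears on two edges. When $n$ is odd, each vertex of $X$ meets at most one $\gamma$-edge, so the two $\gamma$-edges have distinct $X$-endpoints $x_1\ne x_2$; then $\gamma\in CN(x_1)\cap CN(x_2)$ and $|CN(x_1)|=|CN(x_2)|=|Y|=(n-1)/2$ give $|CN(x_1)\cup CN(x_2)|\le n-2$, contradicting the hypothesis. When $n$ is even every vertex is rainbow, so the two $\gamma$-edges form a matching $x_1y_1$, $x_2y_2$ with $x_1\ne x_2$ and $y_1\ne y_2$; now $\gamma$ and $c(x_1y_2)$ both lie in $CN(x_1)\cap CN(y_2)$, and $c(x_1y_2)\ne\gamma$ since $x_1$ meets only one $\gamma$-edge, so $|CN(x_1)\cap CN(y_2)|\ge2$, while $|CN(x_1)|=|CN(y_2)|=n/2$ forces $|CN(x_1)\cup CN(y_2)|\le n-2$, again a contradiction. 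Hence the coloring is rainbow and $G$ is a rainbow $K_{\lceil n/2\rceil,\lfloor n/2\rfloor}$.

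The main obstacle is the odd case of the second paragraph, together with the elimination of the exceptional low-color-degree vertex in the first: when $n$ is odd the convenient Theorem~\ref{theorem_RainbowC_3} is unavailable, because its hypothesis would demand a complete balanced bipartite graph on an odd number of vertices. The delicate point is that a vertex may be monochromatically joined to an entire rainbow $C_4$, so the rainbow $C_4$ of Theorem~\ref{BLWZ} does not by itself constrain the global structure; the argument has to play the Gallai-type local structure forced by the absence of a rainbow $C_3$ against the deficient vertex $v_0$ and the global color-neighborhood bound. Everything after bipartiteness is routine.
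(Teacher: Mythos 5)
Your skeleton (force $\delta^c(G)\ge\lfloor n/2\rfloor$, split on the parity of $n$, extract the complete bipartite underlying graph, then upgrade the coloring to rainbow via the union condition) is plausible, and your endgame is fine: the Andr\'asfai--Erd\H{o}s--S\'os step, the determination of the part sizes, and the two counting arguments showing that no color can repeat are all correct as written. But the two steps you yourself call ``the main obstacle'' are asserted, not proved, and they are exactly the substance of the theorem. (i) The elimination of a vertex $u$ with $d^c(u)\le\lfloor n/2\rfloor-1$ is dispatched with ``one forces a rainbow $C_3$''; no argument is supplied, and the ingredients you name (the color classes at $u$ and the rainbow $C_4$ from Theorem~\ref{BLWZ}) do not obviously combine, precisely because, as you admit, a rainbow $C_4$ may be joined monochromatically to everything else. (ii) The triangle-freeness of $G$ for odd $n$ is reduced to ``a case analysis \dots produces either a rainbow $C_3$ or two vertices violating the condition,'' but the case analysis is never carried out; moreover it is pinned to the color classes at the deficient vertex $v_0$, while a putative triangle $xyz$ need not have any vertex adjacent to $v_0$, so the local Gallai-type structure at $v_0$ does not by itself constrain it. Since (ii) is the heart of the odd case, the proposal as it stands does not prove the theorem.

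Both gaps close quickly if, like the paper, you apply Theorem~\ref{theorem_RainbowC_3} to a vertex-deleted subgraph rather than to $G$ itself. If some vertex $u$ has $d^c(u)\le\frac{n-1}{2}$, then every $v$ adjacent to $u$ has $d^c(v)\ge n-d^c(u)$ (the color of $uv$ is shared) and every other $v$ has $d^c(v)\ge n-1-d^c(u)$, so $\delta^c(G-u)\ge\frac{n-1}{2}$ and Theorem~\ref{theorem_RainbowC_3} forces the underlying graph of $G-u$ to be $K_{\frac{n-1}{2},\frac{n-1}{2}}$ with $n$ odd. This at once disposes of your exceptional vertex: if $d^c(u)\le\lfloor n/2\rfloor-1$, the same computation gives $\delta^c(G-u)\ge\lceil n/2\rceil$, which for even $n$ contradicts the parity conclusion and for odd $n$ exceeds the degree $\frac{n-1}{2}$ available in $K_{\frac{n-1}{2},\frac{n-1}{2}}$. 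In the remaining case $d^c(u)=\frac{n-1}{2}$, one analyses how $u$ attaches to the bipartition $(X,Y)$ of $G-u$ (no rainbow triangle forces $N(u)\subseteq X$ or $N(u)\subseteq Y$, since otherwise all edges at $u$ get one color and the union condition fails), and the rainbow $K_{\frac{n+1}{2},\frac{n-1}{2}}$ drops out from the union condition; no triangle-freeness of $G$ and no Andr\'asfai--Erd\H{o}s--S\'os theorem are needed. I recommend replacing your paragraphs one and two by this deletion argument and keeping your final rainbow-upgrade paragraph, which is essentially the paper's closing computation.
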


So far, we have introduced some results on the existence of rainbow
short cycles in edge-colored graphs. As observed, we need quite a
strong assumption to guarantee the existence of rainbow short cycles.
Similarly, when we consider a degree condition for the existence
of small cycles in uncolored graphs, it becomes a strong assumption. However, this is not the case
if we just want to find a cycle with no restriction on its length in uncolored graphs.
In contrast to this uncolored case, the situation might not change drastically even if we just
hope for the existence of rainbow cycles with no restriction on
their lengths in edge-colored graphs. Yet we could improve the coefficient of $n$ in the
assumption of Theorem~\ref{BLWZ} from $1$ to $1/2$, if we do not
restrict the length of rainbow cycles. Moreover, we could strengthen the
conclusion part as ``vertex-disjoint" rainbow cycles.

\begin{thm}\label{theorem_colorneighbor_verdiscyc}
Let $k$ be a positive integer. If an edge-colored
graph $G$ of order $n$ satisfies $|CN(u)\cup CN(v)|\geq n/2+64k+1$ for every pair of vertices
$u,v\in V(G)$, then $G$ contains $k$ vertex-disjoint rainbow cycles.
\end{thm}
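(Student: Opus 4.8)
The plan is to prove the theorem by repeatedly extracting one rainbow cycle of bounded length and deleting its vertices. The engine is the following one‑cycle statement, which I would establish first: there are absolute constants $L_0$ and $c_1$ such that every edge-colored graph $H$ with $|CN_H(x)\cup CN_H(y)|\ge |V(H)|/2+c_1$ for all $x,y\in V(H)$ contains a rainbow cycle on at most $L_0$ vertices. Granting this, set $H_0=G$; once pairwise vertex-disjoint rainbow cycles $C^{(1)},\dots,C^{(i)}$ have been found, apply the statement to $H_i=G-\bigcup_{j\le i}V(C^{(j)})$ to obtain $C^{(i+1)}$. Deleting the $O(L_0)$ vertices of one cycle decreases every $|CN(x)\cup CN(y)|$ by $O(L_0)$ and $|V|$ by $O(L_0)$, so after $i\le k-1$ rounds the left-hand side has dropped by $O(kL_0)$ while $|V|/2$ has dropped by $O(kL_0)$ as well; choosing $L_0$ to be a small absolute constant (it will be at most $4$, see below), the surplus $64k$ in the hypothesis of the theorem comfortably absorbs this loss for every $k$, so the one-cycle statement still applies to $H_i$. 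The graphs $H_i$ stay large enough automatically: from $|CN(u)\cup CN(v)|\le d^c(u)+d^c(v)\le 2(n-1)$ the hypothesis forces $n$ to exceed a fixed multiple of $k$, far more than the at most $kL_0$ vertices ever deleted.

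For the one-cycle statement I split on $m:=|V(H)|$. If $m\le 2c_1+2$, then $m/2+c_1\ge m-1$, so Theorem~\ref{BLWZ} applies directly and yields a rainbow $C_3$ or $C_4$. If $m> 2c_1+2$, I first convert the neighborhood-union hypothesis into a color-degree hypothesis: from $d^c(u)+d^c(v)\ge |CN(u)\cup CN(v)|\ge m/2+c_1$ for all pairs, at most one vertex $w$ can have $d^c(w)<m/4+c_1/2$, so $H'=H-w$ satisfies $\delta^c(H')\ge m/4+c_1/2-1$ and $|V(H')|\ge m-1$. It therefore suffices to prove: an edge-colored graph on $m'$ vertices with $\delta^c\ge m'/4+c$ (for a suitable absolute constant $c$) contains a rainbow cycle on at most an absolute constant number of vertices.

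To prove this I argue by cases on whether there is a rainbow triangle; if so, we are done. If not, the key observation is that $H'$ cannot be both rainbow-triangle-free and "complete-like". By Gallai's structure theorem a rainbow-triangle-free coloring of $K_m$ arises from a rooted partition tree in which each level adds at most $2$ to any vertex's color degree, and such a tree always has a leaf of depth at most $\log_2 m$, whence $\delta^c(K_m)\le 2\log_2 m$; since $m'/4+c\gg 2\log_2 m'$, our graph $H'$ must be far from complete. I would then use a Ramsey/counting step to locate a large induced subgraph $H'[X,Y]$ that is (almost) complete bipartite with $|X|,|Y|=\Theta(m')$, and find a rainbow $C_4$ inside it from the following elementary fact about the color array of $H'[X,Y]$: if every row of a matrix is rainbow and the matrix has no $2\times 2$ all-distinct submatrix, then any two rows agree outside at most $3$ columns, so almost every column is constant — contradicting the $Y$-vertices having large color degree unless the matrix is of bounded size. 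This produces a rainbow cycle on at most $4$ vertices, so $L_0=4$ works throughout.

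The step I expect to be genuinely hard — and the reason the constants ($64$, and the analogous $105$ in Theorem~\ref{theorem_colorneighbor_C_4}) are wasteful — is this last case: Gallai's theorem only controls complete graphs, so it takes real work to turn "high color degree, no rainbow triangle" into a usable near-complete-bipartite structure for a graph that may be sparse outside that structure, and then to make the passage to the $2\times 2$-submatrix argument airtight (in particular one must ensure the bipartite piece is complete enough, and both sides large enough, for the array fact to bite). The iteration and the small-$m$ reduction to Theorem~\ref{BLWZ} are routine once the one-cycle statement is in hand.
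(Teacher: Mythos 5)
Your outer iteration (delete a short rainbow cycle, recurse) and the small-order reduction to Theorem~\ref{BLWZ} are fine as bookkeeping, but the whole proposal rests on the ``one-cycle statement'': that $|CN(x)\cup CN(y)|\ge |V(H)|/2+c_1$ for all pairs forces a rainbow cycle on a \emph{bounded} number of vertices (you claim at most $4$). This is exactly the step you do not prove, and the sketch for it has genuine gaps. First, the Gallai argument only shows that a rainbow-triangle-free coloring of a \emph{complete} graph has a vertex of color degree $O(\log m)$; it gives no structural information about a non-complete graph with $\delta^c\ge m'/4+c$ and no rainbow triangle, and no mechanism is offered for producing the large, nearly complete bipartite induced piece $H'[X,Y]$ with $|X|,|Y|=\Theta(m')$ that your matrix argument needs. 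Second, the matrix fact you invoke requires every row to be \emph{rainbow} (i.e.\ the $X$-vertices have rainbow stars into $Y$), whereas the hypothesis only gives large color degree, so even on a genuinely complete bipartite piece the $2\times2$ argument does not apply as stated. Third, the endpoint you are aiming for -- a rainbow $C_4$ under $\delta^c\ge m'/4+c$ with no rainbow triangle -- is strictly stronger than the available tool (Lemma~\ref{Lemma_CKRY} needs the underlying graph to be triangle-free \emph{and} $\delta^c\ge m'/3+1$), so it cannot be quoted, and it is not at all clear that a bounded-length rainbow cycle exists at the $1/2$-coefficient level at all; indeed the point of Theorem~\ref{theorem_colorneighbor_verdiscyc} is precisely that the coefficient drops from $1$ to $1/2$ only when the length restriction is abandoned.

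For comparison, the paper's proof never produces short cycles in the leftover graph. It fixes a maximal family of vertex-disjoint rainbow cycles of lengths $3,4,5$, picks an adjacent pair $u,v$ outside it together with a maximum rainbow ``double star'' $S_1\cup S_2$, encodes the colors of edges inside $S_1\cup S_2$ as arcs of an auxiliary digraph $D$ (an arc points to the endpoint whose spoke color it repeats), and applies Alon's theorem (Lemma~\ref{lem_a96}) that minimum out-degree $64k$ yields $k$ vertex-disjoint directed cycles; directed cycles in $D$ correspond to rainbow cycles of arbitrary length in $G$. The failure of this step produces two vertices $w_1,w_2$ of small out-degree, and a counting argument using $|CN(w_1)\cup CN(w_2)|\ge n/2+64k+1$ forces more than $n$ vertices, a contradiction. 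That is where the constant $64k$ comes from, and it is the device that sidesteps the bounded-length lemma your proposal would need. As it stands, your argument is not a proof.
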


By this theorem, we obtain the following corollary, although
Theorem~\ref{theorem_RainbowC_3} already implies it as well.

\begin{cor}\label{C}
Let $k$ be a positive integer. If an edge-colored graph
$G$ of order $n$ satisfies $\delta^c(G)\ge n/2+64k+1$, then $G$ contains $k$
vertex-disjoint rainbow cycles.
\end{cor}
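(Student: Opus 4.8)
The plan is to derive Corollary~\ref{C} from Theorem~\ref{theorem_colorneighbor_verdiscyc} by observing that the color-degree hypothesis is stronger than the color-neighborhood union hypothesis. Concretely, if $\delta^c(G)\ge n/2+64k+1$, then for every pair $u,v\in V(G)$ we have $d^c(u)\ge n/2+64k+1$, and since $CN(u)\subseteq CN(u)\cup CN(v)$, it follows immediately that $|CN(u)\cup CN(v)|\ge |CN(u)| = d^c(u)\ge n/2+64k+1$. Thus $G$ satisfies the hypothesis of Theorem~\ref{theorem_colorneighbor_verdiscyc}, and hence $G$ contains $k$ vertex-disjoint rainbow cycles.

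That is essentially the entire argument, so there is no real obstacle here; the corollary is a one-line consequence. The only point worth spelling out explicitly is the trivial containment $CN(v)\subseteq CN(u)\cup CN(v)$, which gives the monotonicity $|CN(u)\cup CN(v)|\ge \max\{d^c(u),d^c(v)\}\ge \delta^c(G)$. For completeness one might also remark, as the authors do, that the conclusion of Corollary~\ref{C} can alternatively be reached via Theorem~\ref{theorem_RainbowC_3}: under a minimum color degree bound of the form $n/2+\Theta(k)$ one can repeatedly extract a rainbow $C_3$ and delete its three vertices, since deleting a bounded number of vertices decreases every color degree by at most that number, so after removing $3(k-1)$ vertices the minimum color degree of the remaining graph still exceeds half its order, forcing another rainbow triangle by Theorem~\ref{theorem_RainbowC_3}; iterating $k$ times yields $k$ vertex-disjoint rainbow triangles, a fortiori $k$ vertex-disjoint rainbow cycles.

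Since the intended proof is the first, short route, I would simply write: ``Proof of Corollary~\ref{C}. For every pair of vertices $u,v\in V(G)$, we have $|CN(u)\cup CN(v)|\ge |CN(u)| = d^c(u)\ge \delta^c(G)\ge n/2+64k+1$. Hence $G$ satisfies the hypothesis of Theorem~\ref{theorem_colorneighbor_verdiscyc}, and therefore contains $k$ vertex-disjoint rainbow cycles. $\qed$'' The main ``work'', such as it is, is entirely contained in Theorem~\ref{theorem_colorneighbor_verdiscyc}, whose proof (not reproduced here) is presumably where the constant $64k+1$ and the bound $n/2$ are genuinely needed.
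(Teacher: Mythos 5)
Your proof is correct and is exactly the paper's intended derivation: the authors state Corollary~\ref{C} as an immediate consequence of Theorem~\ref{theorem_colorneighbor_verdiscyc}, using precisely the observation that $|CN(u)\cup CN(v)|\ge \delta^c(G)\ge n/2+64k+1$. Your side remark about the alternative route through Theorem~\ref{theorem_RainbowC_3} also matches the authors' own comment that this theorem implies the corollary as well.
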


Comparing with the color degree conditions under the assumptions of
Theorem~\ref{theorem_colorneighbor_verdiscyc} and Corollary~\ref{C},
we can observe that our theorem provides a substantial extension
in view of color degree conditions for the existence of vertex-disjoint
rainbow cycles.

The organization of this paper is as follows. In Section 2,
we prove Theorems \ref{theorem_G0} and \ref{theorem_G1}.
In Section 3, we prove Theorems \ref{theorem_colorneighbor_C_4},
\ref{theorem_colorneighbor_C_3} and \ref{theorem_colorneighbor_verdiscyc}.
We conclude this paper with some remarks and problems.

\section{Proofs of Theorems \ref{theorem_G0} and \ref{theorem_G1}}
Before giving the proofs, we first introduce a concept given in \cite{LNXZ14}.
Let $v$ be a vertex in an edge-colored graph $G$. A color $c$ is
\textit{saturated} by $v$ if all the edges with the color $c$ are
incident to $v$. In this case,  $c\notin C(G-v)$. As in
\cite{LNXZ14}, the \emph{color saturated degree} of $v$ is defined as
$d^s(v):=c(G)-c(G-v)$.

\begin{lemma}[Li, Ning, Xu, and Zhang~\cite{LNXZ14}]\label{lemma_sdegree}
Let $G$ be an edge-colored graph. Then
$\sum\limits_{v\in V(G)} d^s(v)\leq 2c(G)$,
and the equality holds {if and only if} $G$ is rainbow.
\end{lemma}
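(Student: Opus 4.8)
The plan is to establish the inequality by a double-counting argument over the colors of $G$, and then read off the equality case from where this count is tight. The starting point is to reinterpret the color saturated degree: a color $c\in C(G)$ lies in $C(G-v)$ exactly when some edge of color $c$ is not incident to $v$, so $c\notin C(G-v)$ precisely when $v$ saturates $c$ in the sense defined just before the lemma. Hence $d^s(v)=c(G)-c(G-v)$ equals the number of colors saturated by $v$, and summing over $v\in V(G)$ and interchanging the order of summation gives
\[
\sum_{v\in V(G)} d^s(v)=\sum_{c\in C(G)}\bigl|\{\,v\in V(G)\colon v\ \text{saturates}\ c\,\}\bigr|.
\]
It therefore suffices to show that each inner term is at most $2$, and to determine when every inner term equals $2$.

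To bound the inner term, fix a color $c$ and let $E_c$ be the set of edges colored $c$. By definition $v$ saturates $c$ if and only if $v$ is an endpoint of every edge in $E_c$, i.e. $v\in\bigcap_{e\in E_c}e$. If $|E_c|=1$, say $E_c=\{xy\}$, then exactly the two vertices $x$ and $y$ saturate $c$, contributing $2$. If $|E_c|\ge 2$, then since two distinct edges of a (simple) graph meet in at most one vertex, the intersection $\bigcap_{e\in E_c}e$ has at most one element, so at most one vertex saturates $c$. In every case the inner term is at most $2$, which yields $\sum_{v\in V(G)}d^s(v)\le 2c(G)$.

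For the characterization, if equality holds then every inner term must equal $2$, forcing $|E_c|=1$ for all $c\in C(G)$; that is, distinct edges receive distinct colors, so $G$ is rainbow. Conversely, if $G$ is rainbow then each color class is a single edge and contributes exactly $2$, so $\sum_{v\in V(G)}d^s(v)=2e(G)=2c(G)$. I do not anticipate a genuine obstacle in this argument; the only point requiring a little care is the elementary observation that two distinct edges share at most one endpoint, which is precisely what forces the contribution of a repeated color to drop from $2$ to at most $1$, and hence makes the inequality strict as soon as some color is used on more than one edge.
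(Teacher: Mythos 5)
Your argument is correct: the reinterpretation of $d^s(v)$ as the number of colors saturated by $v$ is exactly right, and the double count over colors (each color class contributing $2$ if it is a single edge and at most $1$ otherwise, since two distinct edges share at most one endpoint) gives both the inequality and the equality characterization. The paper itself states this lemma as a citation to Li, Ning, Xu, and Zhang \cite{LNXZ14} and gives no proof, so there is nothing to compare against beyond noting that your double-counting argument is the standard and essentially only natural proof of this fact.
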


\begin{lemma}\label{lemma_complete0}
Let $G$ be an edge-colored graph of order $n\geq 2$. If
$e(G)+c(G)=\binom{n+1}{2}-1$ and $G$ contains no rainbow
triangle, then $G$ is complete and contains a vertex $u$
such that $d^s(u)=1$.
\end{lemma}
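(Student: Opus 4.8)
The plan is to induct on $n$. The base case $n=2$ is immediate: from $e(G)+c(G)=\binom{3}{2}-1=2$ together with $c(G)\le e(G)\le 1$ we get $e(G)=c(G)=1$, so $G$ is a single edge, hence complete, and its unique color is saturated by either endpoint $u$, giving $d^s(u)=1$.

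For the inductive step assume $n\ge 3$. First I would locate a ``light'' vertex: by Lemma~\ref{lemma_sdegree} and the handshake identity $\sum_v d(v)=2e(G)$,
\[
\sum_{v\in V(G)}\bigl(d(v)+d^s(v)\bigr)\le 2e(G)+2c(G)=2\binom{n+1}{2}-2=n(n+1)-2,
\]
so some vertex $v_0$ satisfies $d(v_0)+d^s(v_0)\le n$. Since $G-v_0$ still has no rainbow triangle, Theorem~\ref{thm_rainbowK3} (applied to $G-v_0$ when $n\ge 4$; the bound is trivial when $n=3$) gives $e(G-v_0)+c(G-v_0)\le\binom{n}{2}-1$. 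Using $e(G)=e(G-v_0)+d(v_0)$ and $c(G)=c(G-v_0)+d^s(v_0)$ we obtain
\[
\binom{n+1}{2}-1=e(G)+c(G)\le\binom{n}{2}-1+n=\binom{n+1}{2}-1,
\]
forcing equality throughout: $d(v_0)+d^s(v_0)=n$ and $e(G-v_0)+c(G-v_0)=\binom{n}{2}-1$. Hence the induction hypothesis applies to $G-v_0$, so $G-v_0$ is complete of order $n-1$.

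The key step is to show $d^s(v_0)\le 1$. Suppose instead $v_0$ saturates two distinct colors $\alpha\ne\beta$, and pick an edge $v_0a$ of color $\alpha$ and an edge $v_0b$ of color $\beta$; then $a\ne b$ and $a,b\in V(G-v_0)$. Since $G-v_0$ is complete, $ab\in E(G)$, and since $\alpha$ and $\beta$ are saturated by $v_0$, the color of $ab$ is neither $\alpha$ nor $\beta$; thus $v_0ab$ is a rainbow triangle, a contradiction. So $d^s(v_0)\le 1$, and combined with $d(v_0)+d^s(v_0)=n$ and $d(v_0)\le n-1$ this forces $d(v_0)=n-1$ and $d^s(v_0)=1$. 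Therefore $v_0$ is adjacent to every other vertex, and since $G-v_0$ is complete, $G$ is complete; moreover $u:=v_0$ satisfies $d^s(u)=1$.

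I do not expect a serious obstacle here. The real content is the observation in the last paragraph that a vertex saturating two colors on top of a complete, rainbow-triangle-free graph immediately produces a rainbow triangle; the rest is bookkeeping, the main point of care being to verify that the equality in the displayed chain places $G-v_0$ exactly in the induction hypothesis, and to handle the degenerate small orders $n=2,3$ where Theorem~\ref{thm_rainbowK3} is either the base case or vacuous.
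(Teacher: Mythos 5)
Your proof is correct and follows essentially the same route as the paper's: both combine Lemma~\ref{lemma_sdegree} with the handshake identity to locate a vertex $u$ with $d(u)+d^s(u)=n$ (you find $\le n$ first and force equality via Theorem~\ref{thm_rainbowK3} applied to $G-u$, the paper proves the two inequalities as separate claims), then apply the induction hypothesis to $G-u$ and finish with the same observation that two colors saturated at $u$ over the complete graph $G-u$ would yield a rainbow triangle. Your handling of the small cases $n=2,3$ is also fine.
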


\begin{proof}
We prove this lemma by induction on the order of $G$.
It is trivial that the result holds for $n=2,3$. Now
assume that it holds for a graph with order smaller than $n$,
where $n\geq 4$.
	
\begin{claim}
For every $v\in V(G)$, $d(v)+d^s(v)\geq n$.
\end{claim}
	
\begin{proof}
Suppose not. Then there exists a vertex $v\in V(G)$ satisfying
$d(v)+d^s(v)\leq n-1$. This implies that $e(G-v)+c(G-v)=e(G)+c(G)-d(v)-d^s(v)\geq \binom{n}{2}$.
It follows from Theorem \ref{thm_rainbowK3} that $G-v$ contains a
rainbow triangle, a contradiction.
\end{proof}
	
\begin{claim}
There exists a vertex $u\in V(G)$ such that $d(u)+d^s(u)= n$.
\end{claim}
	
\begin{proof}
Suppose not. Then $d(v)+d^s(v)\geq n+1$ for every $v\in V(G)$. It
follows from Lemma \ref{lemma_sdegree} that
\[
n(n+1)\leq \sum_{v\in V(G)}\bigl(d(v)+d^s(v)\bigr)\leq 2e(G)+2c(G)=n(n+1)-2,
\]
a contradiction.
\end{proof}
	
It is easy to see that $e(G-u)+c(G-u)=e(G)+c(G)-d(u)-d^s(u)=\binom{n}{2}-1$.
By the induction hypothesis, the graph $G-u$ is complete.
	
If $d(u)< n-1$ then $d^s(u)\geq 2$. Let $uv,uw$ be two edges with
distinct colors which are saturated by $u$. By the definition of
saturated colors, neither $C(uv)$ nor $C(uw)$ appears in $G-u$.
Thus, $uvwu$ is a rainbow triangle, a contradiction. It follows
that $d(u)=n-1$ and $d^s(u)=1$. Thus, $G$ is complete and $d^s(u)=1$.
This proves Lemma \ref{lemma_complete0}.
\end{proof}

A \emph{Gallai coloring} is an edge-coloring of the complete graph $K_n$ such that
there are no rainbow triangles in it. (See the references in \cite{GS04}.)
The following two classical theorems on Gallai colorings play an important
role in the proof of Theorem \ref{theorem_G0}.

\begin{lemma}[Gy\'arf\'as and Simonyi~\cite{GS04}]\label{lemma_Gallaicoloring}
Any Gallai coloring can be obtained by substituting complete graphs with
Gallai colorings into vertices of 2-edge-colored complete graphs
with at least two vertices.

\end{lemma}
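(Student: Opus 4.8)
The plan is to argue by induction on $n=|V(K_n)|$, reducing everything to a single decomposition step. It suffices to show that every Gallai coloring of $K_n$ with $n\ge 2$ admits a partition $V(K_n)=V_1\cup\cdots\cup V_k$ with $k\ge 2$ such that each $G[V_i,V_j]$ ($i\ne j$) is monochromatic and the \emph{reduced} edge-colored $K_k$ (whose edge between the classes $V_i$ and $V_j$ carries the unique color of $G[V_i,V_j]$) uses at most two colors. Granting this, each $G[V_i]$ carries a Gallai coloring of a smaller complete graph (a singleton being trivial), which by induction is itself such a substitution; plugging these Gallai colorings into the vertices of the $2$-edge-colored $K_k$ reproduces the original coloring, which is exactly the assertion.

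The engine for the decomposition step is the notion of a \emph{module}: a set $\emptyset\ne M\subseteq V(K_n)$ such that every vertex outside $M$ is joined to all of $M$ by edges of one and the same color. The key elementary observation is that, in any Gallai coloring, each connected component of a color class $G_i$, and each connected component of its complement $\overline{G_i}=K_n-E(G_i)$, is a module: if $u,u'$ lie in one monochromatic (color-$i$) component and an outside vertex $v$ were joined to them by two different colors, then walking along a color-$i$ path from $u$ to $u'$ and applying the no-rainbow-triangle condition to consecutive triangles would force $v$ to meet the whole component in a single color, a contradiction, and the case of $\overline{G_i}$ is analogous. In particular, if some $\overline{G_i}$ is disconnected then its components give a partition whose reduced graph is \emph{monochromatic} (every crossing edge has color $i$), and we are done; so we may assume every $G_i$ and every $\overline{G_i}$ is connected and spanning. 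If, in addition, the coloring has a nontrivial module, then Gallai's modular decomposition supplies a \emph{Gallai partition} $V_1\cup\cdots\cup V_k$, $k\ge 2$ (each $V_i$ a module, the coloring monochromatic between any two parts), and it only remains to bound the number of colors on the quotient $K_k$.

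The crux, which I expect to be the main obstacle, is therefore: a Gallai coloring of $K_n$ whose only modules are the trivial ones (singletons and $V(K_n)$) uses at most two colors. Combined with the two reductions this finishes the induction, since the quotient produced by the modular decomposition is either of this ``prime'' type or already uses at most two colors. In the prime case the observation above already prevents any split coming from a single color, so a third color would have to \emph{create} a nontrivial module under the no-rainbow-triangle restriction. I would try to manufacture one around a well-chosen vertex $v$ from the partition of the remaining vertices according to the color of their edge to $v$ --- between two such classes only two colors can occur, which is already a strong restriction --- and refine it until a genuinely homogeneous set emerges. Making this propagation watertight, in particular controlling how the third color interacts with the triangle condition, is the delicate point; it is precisely the structural content of Gallai's 1967 decomposition theorem, which one could alternatively quote directly to reach the conclusion.
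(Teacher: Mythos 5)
This lemma is not proved in the paper at all: it is quoted as a known theorem of Gallai, in the formulation of Gy\'arf\'as and Simonyi \cite{GS04}, so there is no in-paper argument to compare with; your proposal has to stand on its own as a proof of Gallai's theorem. The parts you actually argue are fine: each component of a color class $G_i$, and each component of the complement $\overline{G_i}$, is indeed a module (your triangle-propagation argument along a color-$i$ path is correct, and if some $\overline{G_i}$ is disconnected its components give a partition with all crossing edges of color $i$). The problem is that the entire nontrivial content of the lemma is concentrated in the step you defer, namely that a Gallai coloring of $K_n$ all of whose modules are trivial uses at most two colors. For that step you offer only an intention (``manufacture a nontrivial module around a well-chosen vertex \ldots and refine it until a genuinely homogeneous set emerges''), with no actual mechanism for the refinement or for why a third color forces a nontrivial module; and your fallback, ``quote Gallai's 1967 decomposition theorem directly,'' is circular in this context, because the statement you are asked to prove \emph{is} that theorem (restated for edge-colored complete graphs), which is exactly why the paper cites it rather than proving it.

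A secondary, smaller issue: even granting the prime case, your middle reduction (``if the coloring has a nontrivial module, Gallai's modular decomposition supplies a Gallai partition'') also leans on the machinery whose colored version you are trying to establish. To make this route self-contained you would need to prove, for edge-colored complete graphs without rainbow triangles, that the partition into maximal proper modules is well defined (maximal proper modules are pairwise disjoint unless the coloring is degenerate) and that the induced quotient coloring is again a Gallai coloring with only trivial modules, and then still prove the two-color bound for that prime quotient. So the skeleton of the induction is reasonable, but as written the proposal establishes only the easy reductions and leaves the theorem's core unproven.
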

\begin{lemma}[Erd\H{o}s, Simonovits, and S\'{o}s~\cite{ESS73}]\label{lemma_K_ncoloring}
Any Gallai coloring of $K_n$ can use at most $n-1$ colors.
\end{lemma}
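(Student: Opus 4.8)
The plan is to proceed by induction on $n$. The base cases $n=1$ and $n=2$ are immediate: $K_1$ has no edges and $K_2$ has a single edge, so in both cases the number of colors is at most $n-1$. For the inductive step, suppose $n\ge 3$ and the claim holds for all smaller complete graphs. I would apply Gallai's structure theorem (Lemma~\ref{lemma_Gallaicoloring}) to the given Gallai coloring of $K_n$: there is a partition $V(K_n)=V_1\cup\cdots\cup V_m$ with $m\ge 2$ such that every $G[V_i]$ is Gallai-colored, every edge between $V_i$ and $V_j$ receives the same color, and the ``reduced'' coloring induced on the blocks is an edge-coloring of $K_m$ using at most two colors. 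Since $m\ge 2$, each block satisfies $|V_i|\le n-1$, so the inductive hypothesis applies to each $G[V_i]$.

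Now I would estimate the total number of colors by a union bound over the blocks and the between-block edges:
\[
c(K_n)\;\le\; c_{\mathrm{red}}+\sum_{i=1}^{m}c\bigl(G[V_i]\bigr)\;\le\; c_{\mathrm{red}}+\sum_{i=1}^{m}\bigl(|V_i|-1\bigr)\;=\;c_{\mathrm{red}}+n-m,
\]
where $c_{\mathrm{red}}$ is the number of distinct colors appearing on the between-block edges. It remains to bound $c_{\mathrm{red}}$, and here I would split into two cases. If $m\ge 3$, then the reduced coloring uses at most two colors, so $c_{\mathrm{red}}\le 2$ and hence $c(K_n)\le 2+n-m\le n-1$. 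If $m=2$, then $K_m=K_2$ has only one edge, so $c_{\mathrm{red}}=1$ and $c(K_n)\le 1+n-2=n-1$. In either case the desired bound follows, completing the induction.

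I do not anticipate a real obstacle, since Lemma~\ref{lemma_Gallaicoloring} carries essentially all the structural weight. The one subtlety worth flagging is that combining $m\ge 2$ with the crude estimate $c_{\mathrm{red}}\le 2$ only gives $c(K_n)\le n$, which is one short of what we want; the fix is precisely the case distinction above, using that a reduced graph on exactly two blocks has a single edge and therefore contributes only one color. One should also note that the colors counted in $c_{\mathrm{red}}$ and those counted inside the blocks are combined via a genuine inequality, so any overlap between them only helps — which is exactly what the union bound delivers.
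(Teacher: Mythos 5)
Your argument is correct. Note first that the paper itself does not prove this lemma at all: it is imported as a known result of Erd\H{o}s, Simonovits and S\'os, so there is no in-paper proof to match. Your derivation instead deduces it from the Gallai structure theorem (Lemma~\ref{lemma_Gallaicoloring}) by induction on $n$, and every step checks out: the blocks are nonempty and $m\geq 2$, so each $|V_i|\leq n-1$ and the induction hypothesis applies (each $G[V_i]$ is indeed a Gallai-colored complete graph, being an induced subgraph); every color lies either inside some block or on a between-block edge, giving the union bound $c(K_n)\leq c_{\mathrm{red}}+\sum_i(|V_i|-1)=c_{\mathrm{red}}+n-m$; and the split $m\geq 3$ (where $c_{\mathrm{red}}\leq 2$) versus $m=2$ (where the reduced graph has a single edge, so $c_{\mathrm{red}}=1$) is exactly the subtlety that closes the gap from $n$ to $n-1$ --- you identified and handled it correctly. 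For comparison, the classical anti-Ramsey proof of Erd\H{o}s--Simonovits--S\'os is more elementary and does not use Gallai's decomposition: if $K_n$ were colored with at least $n$ colors, choose one edge of each color to get a spanning subgraph with at least $n$ edges, hence containing a cycle, and a shortest cycle in this subgraph together with a chord of the corresponding complete graph yields a rainbow triangle. Your route trades that elementary argument for the heavier structural Lemma~\ref{lemma_Gallaicoloring}, but since that lemma is already available in the paper and carries the structural weight, your proof is a perfectly clean and self-contained way to obtain the bound in this context.
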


\smallskip
\noindent
{\bf {Proof of Theorem \ref{theorem_G0}.}}
We prove this result by induction on the
order of $G$. Obviously, the result
holds for $n=1,2,3$. Now assume that it
holds for any graph with order smaller than $n\geq 4$.
	
By Theorem \ref{thm_rainbowK3}, we can assume that
$e(G)+c(G)=\binom{n+1}{2}-1$. It follows from Lemma \ref{lemma_complete0} that
$G$ is complete. Since $e(G)+c(G)=\binom{n+1}{2}-1$,
$c(G)=n-1$. Thus the edge-coloring of $G$ is
a Gallai coloring with $n-1$ colors. By Lemma \ref{lemma_Gallaicoloring},
the coloring of $G$ can be obtained by substituting complete
graphs $H_1,H_2,\ldots,H_k$ with Gallai colorings
into vertices of a 2-edge-colored complete graph $K_k$, where $k\geq 2$,
and $|H_i|=n_i$, $i=1,2,\ldots,k$. Note that $\sum_{i=1}^kn_i=n$.
By Lemmas \ref{lemma_Gallaicoloring} and \ref{lemma_K_ncoloring},
\[
c(G)\leq \sum_{i=1}^{k}c(H_i)+2\leq \sum_{i=1}^{k}(n_i-1)+2=n-k+2.
\]
On the other hand,  $c(G)=n-1$. Thus $k=2,3$.

It is easy to see that every 2-edge-colored $K_k$ has a monochromatic
cut for $k=2,3$. By Lemma \ref{lemma_Gallaicoloring}, there
is also a monochromatic cut in $G$. Let $V_1,V_2$ be the classes
of this monochromatic cut.
It follows from Lemma \ref{lemma_K_ncoloring} that
\[
n-1=c(G)\leq c(G[V_1])+c(G[V_2])+c(G[V_1,V_2])\leq (|V_1|-1)+(|V_2|-1)+1=n-1.
\]
This implies that
\[
c(G)= c(G[V_1])+c(G[V_2])+c(G[V_1,V_2]),
\]
which holds if and only if $C(G[V_1])$, $C(G[V_2])$ and $C(G[V_1,V_2])$
are pairwise disjoint sets. Moreover,
\[
c(G[V_1])=|V_1|-1,\quad c(G[V_2])=|V_2|-1\quad \text{and}\quad c(G[V_1,V_2])=1.
\]
By the induction hypothesis, both $G[V_1]$ and $G[V_2]$ belong to $\mathcal{G}_0$.
It follows from the definition of $\mathcal{G}_0$ that $G\in \mathcal{G}_0$.

The proof is complete.{\hfill$\Box$}

\smallskip
The proof of Theorem \ref{theorem_G1} is based on the following two lemmas.

\begin{lemma}[Rademacher \cite{E55}]\label{lemma_CountK3s}
Let $G$ be a graph with order $n$ and size $m$.
If $m\geq \lfloor \frac{n^2}{4}\rfloor+1$, then $G$ contains at least
$\lfloor \frac{n}{2}\rfloor$ triangles.
\end{lemma}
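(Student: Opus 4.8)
The plan is to prove Lemma~\ref{lemma_CountK3s} by induction on $n$, deleting a vertex of minimum degree and applying Mantel's theorem to the rest. Since adding edges never decreases the number of triangles, one may assume $m=\lfloor n^2/4\rfloor+1$ exactly, otherwise passing to a spanning subgraph with exactly this many edges and proving the bound there. The cases $n\le 4$ are settled by inspection: for $n=3$ the graph is $K_3$, and for $n=4$ it is $K_4$ or $K_4$ minus an edge, each containing at least two triangles.

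For $n\ge 5$, the first step is that $\delta(G)\le\lfloor n/2\rfloor$, since $\delta(G)\ge\lfloor n/2\rfloor+1$ would give $2m=\sum_{v\in V(G)}d(v)\ge n(\lfloor n/2\rfloor+1)>2\lfloor n^2/4\rfloor+2=2m$, a contradiction. Fix $w$ with $d(w)=\delta(G)$. Using the elementary identity $\lfloor n^2/4\rfloor-\lfloor (n-1)^2/4\rfloor=\lfloor n/2\rfloor$, one gets $e(G-w)=m-d(w)\ge\lfloor (n-1)^2/4\rfloor+1$, so by the induction hypothesis $G-w$ already contains at least $\lfloor (n-1)/2\rfloor$ triangles, none of them through $w$. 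When $n$ is odd this is $\lfloor n/2\rfloor$, and we are done.

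The even case is where I expect the real work to lie, since one extra triangle must be produced; I would split it as follows. If $d(w)<n/2$, then $e(G-w)\ge\lfloor (n-1)^2/4\rfloor+2$, so $G-w$ has a triangle by Mantel's theorem; deleting one of its edges $f$ leaves at least $\lfloor (n-1)^2/4\rfloor+1$ edges, hence at least $\lfloor (n-1)/2\rfloor$ triangles avoiding $f$ by induction, and the triangle through $f$ is an extra one. If $d(w)=n/2$ and $w$ lies in a triangle, that triangle is disjoint from those counted in $G-w$. Finally, if $d(w)=n/2$ and $w$ lies in no triangle, then $N(w)$ is independent, so each vertex of $N(w)$ is adjacent only to $w$ and to $R:=V(G)\setminus(\{w\}\cup N(w))$, which has $n/2-1$ vertices; the minimum degree condition then forces every vertex of $N(w)$ to have degree exactly $n/2$ and to be joined to all of $R$, and counting edges pins down $e(G[R])=1$, so the unique edge of $R$ together with each of the $n/2$ vertices of $N(w)$ forms a triangle. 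Either way one reaches $\lfloor n/2\rfloor$ triangles. The bulk of the argument is thus a routine induction driven by Mantel's theorem and the identity $\lfloor n^2/4\rfloor-\lfloor (n-1)^2/4\rfloor=\lfloor n/2\rfloor$; the delicate point is this even case, and especially its last subcase, where the exact structure of $G$ has to be recovered from the degree sequence and the edge count.
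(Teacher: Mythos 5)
Your proposal is correct and complete. One thing to be aware of: the paper does not prove this statement at all --- it is quoted as Rademacher's theorem with a citation to \cite{E55} --- so there is no in-paper argument to compare yours against; what you have written is a self-contained proof of the cited classical result. The details check out: the reduction to $m=\lfloor n^2/4\rfloor+1$ is harmless, the bound $\delta(G)\le\lfloor n/2\rfloor$ and the identity $\lfloor n^2/4\rfloor-\lfloor (n-1)^2/4\rfloor=\lfloor n/2\rfloor$ give $e(G-w)\ge\lfloor (n-1)^2/4\rfloor+1$, and the odd case follows at once. In the even case, the subcase $d(w)<n/2$ works because $e(G-w)\ge\lfloor (n-1)^2/4\rfloor+2$ exceeds the Mantel bound, and after deleting one edge $f$ of the resulting triangle the induction hypothesis still applies to $G-w-f$, producing $n/2-1$ triangles avoiding $f$ plus the one through $f$; the subcase where $w$ lies in a triangle is immediate; and in the last subcase your degree count correctly forces $d(x)=n/2$ for every $x\in N(w)$, hence $N(w)$ completely joined to $R$, so that $m=n^2/4+e(G[R])$ yields $e(G[R])=1$ and the $n/2$ triangles through that unique edge of $R$. (In fact $e(G[R])\ge 1$ is all you need there, which follows from $m\ge n^2/4+1$ even without the exact-count reduction, so that step can be stated slightly more robustly.)
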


\begin{lemma}\label{lemma_complete1}
Let $G$ be an edge-colored graph of order $n\geq3$.
If $e(G)+c(G)\geq\binom{n+1}{2}$ and $G$ contains exactly
one rainbow triangle, then $e(G)+c(G)=\binom{n+1}{2}$
and $G$ is complete.
\end{lemma}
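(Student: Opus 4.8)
The plan is to run an induction on $n$ paralleling the proof of Lemma~\ref{lemma_complete0}, now keeping track of the unique rainbow triangle $T$. The base case $n=3$ is immediate: since $e(G)\le\binom{3}{2}=3$ and $c(G)\le3$, the hypothesis forces $e(G)+c(G)=6$, so $G$ is a rainbow $K_3$, which is complete. For the inductive step ($n\ge4$) write $e(G)+c(G)=\binom{n+1}{2}+t$ with $t\ge0$ an integer, and let $T$ be the unique rainbow triangle.

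The key observation is that deleting a vertex $v$ destroys every rainbow triangle when $v\in V(T)$, and leaves exactly one (namely $T$) when $v\notin V(T)$, since every rainbow triangle of $G-v$ is also one of $G$ and hence equals $T$. Using Theorem~\ref{thm_rainbowK3} for $v\in V(T)$ and the induction hypothesis applied to $G-v$ for $v\notin V(T)$, I would first establish
\[
d(v)+d^{s}(v)\ge n+t+1\ \text{ for }v\in V(T),\qquad d(v)+d^{s}(v)\ge n+t\ \text{ for }v\notin V(T).
\]
Indeed, if one of these failed then $e(G-v)+c(G-v)=e(G)+c(G)-d(v)-d^{s}(v)$ would be at least $\binom{n}{2}$ (resp. at least $\binom{n}{2}+1$): in the first case Theorem~\ref{thm_rainbowK3} would give a rainbow triangle in $G-v$ while $v\in V(T)$, and in the second case the induction hypothesis applied to $G-v$ would force $e(G-v)+c(G-v)=\binom{n}{2}$, contradicting the bound $\binom{n}{2}+1$. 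Summing the two displayed inequalities over $V(G)$ and using $\sum_{v}d^{s}(v)\le2c(G)$ from Lemma~\ref{lemma_sdegree}, we get $n^{2}+n+2t=2e(G)+2c(G)\ge n^{2}+nt+3$, hence $(n-2)t\le n-3$ and so $t=0$. Thus $e(G)+c(G)=\binom{n+1}{2}$.

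Next I would locate a convenient vertex: there is some $u\notin V(T)$ with $d(u)+d^{s}(u)=n$. If not, every vertex satisfies $d(v)+d^{s}(v)\ge n+1$, so the summation above becomes an equality throughout; by Lemma~\ref{lemma_sdegree} this forces $G$ to be rainbow, and then $d^{s}(v)=d(v)$ for every $v$, so $G$ is $\frac{n+1}{2}$-regular with $n$ odd (hence $n\ge5$). But a rainbow graph has all its triangles rainbow, so $G$ would have exactly one triangle, whereas $e(G)=\frac{n(n+1)}{4}\ge\lfloor n^{2}/4\rfloor+1$ forces at least $\lfloor n/2\rfloor\ge2$ triangles by Lemma~\ref{lemma_CountK3s} --- a contradiction. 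Fixing such a $u$, we have $e(G-u)+c(G-u)=\binom{n}{2}$ and $G-u$ has exactly one rainbow triangle, so the induction hypothesis makes $G-u$ complete. Finally, if $d(u)<n-1$ then $d^{s}(u)\ge2$; picking edges $ua,ub$ of distinct colors both saturated by $u$, their colors avoid $C(G-u)$, so together with the edge $ab\in E(G-u)$ the triangle $uab$ is rainbow and different from $T$ --- a contradiction. Hence $d(u)=n-1$, and since $G-u$ is complete, so is $G$.

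The main obstacle is the exceptional case in the third step, where all the degree bounds are simultaneously tight: one has to exclude a hypothetical rainbow $\frac{n+1}{2}$-regular graph on $n\ge5$ vertices with a single triangle, and this is precisely where Rademacher's counting (Lemma~\ref{lemma_CountK3s}) is needed. The rest is a routine adaptation of the proof of Lemma~\ref{lemma_complete0}, with the extra bookkeeping on $t$ serving to upgrade the conclusion from "$G$ is complete" to "$e(G)+c(G)=\binom{n+1}{2}$ and $G$ is complete".
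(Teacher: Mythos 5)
Your proof is correct, and its skeleton (induction on $n$, the identity $e(G-v)+c(G-v)=e(G)+c(G)-d(v)-d^s(v)$, Lemma~\ref{lemma_sdegree}, Rademacher's count, and the final saturated-color argument forcing $d(u)=n-1$) matches the paper's. The genuine difference is how you pin down $e(G)+c(G)=\binom{n+1}{2}$: the paper deletes an edge $e$ of the unique rainbow triangle, notes $G-e$ has no rainbow triangle with $e(G-e)+c(G-e)\geq\binom{n+1}{2}-1$, and gets a contradiction from Theorem~\ref{theorem_G0} (which would make $G-e$ complete); you instead carry the excess $t$ through the strengthened bounds $d(v)+d^s(v)\geq n+t+1$ on $V(T)$ and $\geq n+t$ off $V(T)$, and the summation $(n-2)t\leq n-3$ kills $t$. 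This makes the lemma independent of Theorem~\ref{theorem_G0} (the paper's version of the lemma is not), at the cost of a slightly heavier counting step; your treatment of the rainbow case is also folded into the "no vertex $u\notin V(T)$ with $d(u)+d^s(u)=n$" scenario rather than isolated as the paper's Claim~1, but the use of Lemma~\ref{lemma_CountK3s} there is the same (the extra regularity observation $d(v)=\frac{n+1}{2}$ is harmless and not needed). Both arguments are sound; the paper's is shorter where it can lean on Theorem~\ref{theorem_G0}, yours is more self-contained.
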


\begin{proof}
We prove this result by induction on the order of $G$.
It is trivial for $n=3$. Now we assume that the lemma holds
for any graph of order smaller than $n\geq 4$. Denote
by $v_1v_2v_3v_1$ the unique rainbow triangle in $G$.
Let $V_1=\{v_1,v_2,v_3\}$ and $V_2=V(G)\backslash V_1$.
	
\setcounter{claim}{0}
\begin{claim}
$G$ is not rainbow.
\end{claim}
	
\begin{proof}
Suppose that $G$ is rainbow. Then
$e(G)=c(G)\geq \frac{n^2}{4}+\frac{n}{4}\geq \lfloor \frac{n^2}{4}\rfloor+1$.
It follows from Lemma \ref{lemma_CountK3s} that
$G$ contains at least $\lfloor n/2\rfloor\geq 2$ triangles,
which are rainbow triangles in $G$, a contradiction.
\end{proof}
	
\begin{claim}
$e(G)+c(G)=\binom{n+1}{2}$.
\end{claim}
	
\begin{proof}
Suppose that $e(G)+c(G)\geq\binom{n+1}{2}+1$.
Let $e$ be an edge in the unique rainbow triangle
of $G$. Then $G-e$ contains no rainbow triangle, and
\[
e(G-e)+c(G-e)\geq (e(G)-1)+(c(G)-1)\geq \binom{n+1}{2}-1.
\]	
It follows from Theorem \ref{theorem_G0} that $G-e$ is
complete, a contradiction.
\end{proof}
	
\begin{claim}
For every $v\in V_1$, $d(v)+d^s(v)\geq n+1$; for every
$v\in V_2$, $d(v)+d^s(v)\geq n$.
\end{claim}
	
\begin{proof}
For every $v\in V_1$, $G-v$ contains no rainbow triangle.
It follows from Theorem \ref{thm_rainbowK3} that
$e(G-v)+c(G-v)\leq \binom{n}{2}-1$. Thus $d(v)+d^s(v)\geq n+1$.
		
Suppose that there exists a vertex $u\in V_2$ such that
$d(u)+d^s(u)\leq n-1$. Then $G-u$ contains a unique
rainbow triangle and $e(G-u)+c(G-u)\geq \binom{n}{2}+1$.
It follows from the induction hypothesis that
$e(G-u)+c(G-u)=\binom{n}{2}$, a contradiction.
\end{proof}
	
\begin{claim}
There exists a vertex $u\in V_2$ such that $d(u)+d^s(u)=n$.
\end{claim}
	
\begin{proof}
Suppose not. Then, $d(v)+d^s(v)\geq n+1$ for every $v\in V_2$.
By Claim 3 and Lemma \ref{lemma_sdegree},
\[
n(n+1)\leq \sum_{v}\bigl(d(v)+d^s(v)\bigr)\leq 2e(G)+2c(G)=n(n+1).
\]
Thus $\sum_v d^s(v)=2c(G)$. It follows from Lemma
\ref{lemma_sdegree} that $G$ is rainbow, a contradiction to Claim 1.
\end{proof}
	
Let $u$ be as in Claim 4. Note that $G-u$ contains exactly
one rainbow triangle and
\[
e(G-u)+c(G-u)=e(G)+c(G)-d(u)-d^s(u)=\binom{n}{2}.
\]
It follows from the induction hypothesis that $G-u$ is complete.
	
Now we show that $d(u)=n-1$. Suppose that $d(u)<n-1$.
Then, we obtain $d^s(u)\geq 2$. Let $uv$ and $uw$ be
two edges with distinct colors which are saturated by
$u$. It is easy to see that $uvwu$ is a rainbow triangle
distinct from $v_1v_2v_3v_1$, a contradiction.
Thus, $G$ is complete, and together with Claim 2,
this proves Lemma \ref{lemma_complete1}.
\end{proof}

\smallskip
\noindent
{\bf {Proof of Theorem \ref{theorem_G1}.}}
We prove this result by induction on the order
of $G$. It is trivial for $n=3$. Now assume
that the theorem holds for graphs with order smaller
than $n\geq 4$. Denote by $v_1v_2v_3v_1$ the unique
rainbow triangle in $G$.
	
We show that $C(v_1v_2),C(v_1v_3)$
are saturated by the vertex $v_1$. It follows from Claim 3 (in
the proof of Lemma \ref{lemma_complete1}) that $d(v_i)+d^s(v_i)\geq n+1$
for each $i=1,2,3$, and hence $d^s(v_i)\geq 2$ for each $i=1,2,3$.
First, suppose that there is exactly one color in $\{C(v_1v_2),C(v_1v_3)\}$,
say $C(v_1v_2)$, which is saturated by $v_1$. Since $d^s(v_1)\geq 2$, we can
choose $w\in N(v_1)$ such that $w\neq v_2$, $C(v_1w)\neq C(v_1v_2)$ and
$C(v_1w)$ is saturated by $v_1$. Since $C(v_1v_3)$ is not saturated by 
$v_1$, we have $C(v_1w)\neq C(v_1v_3)$, and thus $w\neq v_3$. 
Now $C(wv_2)\neq C(v_1v_2)$ and $C(wv_2)\neq C(v_1w)$,
and $v_1v_2wv_1$ is a rainbow $C_3$. Hence there are two rainbow $C_3$'s,
a contradiction. Suppose that none of $\{C(v_1v_2),C(v_1v_3)\}$ is
saturated by $v_1$. There are $w,x\in N(v_1)$ such that $C(v_1w),C(v_1x)$
are saturated by $v_1$, so $C(v_1v_2)$, $C(v_1v_3)$, $C(v_1w)$
and $C(v_1x)$ are distinct. Moreover, $v_1wxv_1$ is a rainbow triangle.
Hence there are two rainbow
triangles in $G$, a contradiction. Thus, we have proved that $C(v_1v_2),C(v_1v_3)$
are saturated by the vertex $v_1$. Similarly, $C(v_2v_1),C(v_2v_3)$ are
saturated by $v_2$, and $C(v_3v_1),C(v_3v_2)$ are saturated by $v_3$.
Notice that $C(v_1v_2)$ is saturated by both $v_1$ and $v_2$.
Thus, $C(v_1v_2)$ appears only once in $G$. Similarly, we can see
that $C(v_1v_3)$ and $C(v_2v_3)$ appear only once in $G$.

By Lemma \ref{lemma_complete1}, since $G$ is complete, it is easy
to see that there is no edge $v_iw$ satisfying
$w\in V(G)\backslash\{v_1,v_2,v_3\}$ and $C(v_iw)$ is
saturated by $v_i$, for each $i=1,2,3$.
	
Let $G^*$ be the edge-colored graph obtained by replacing
the color of $v_1v_2$ by $C(v_1v_3)$. For any vertex
$w\in V(G)\backslash \{v_1,v_2,v_3\}$ and $i,j\in \{1,2,3\}$,
since $wv_iv_jw$ is not rainbow in $G$ and each color on $v_1v_2v_3v_1$
appears only once, $C(wv_i)=C(wv_j)$. Hence $wv_iv_jw$
is not rainbow in $G^*$. So, $G^*$ contains
no rainbow triangle and $c(G^*)=n-1$. It follows from
Theorem \ref{theorem_G0} that $G^*$ belongs to $\mathcal{G}_0$.
Thus there exists a partition $V=V_1\cup V_2$
(we can assume $\{v_1,v_2,v_3\}\subseteq V_1$), such
that $G^*[V_1,V_2]$ is monochromatic and $G^*[V_i]\in\mathcal{G}_0$
for $i=1,2$.
	
It is easy to see that $G[V_1,V_2]$ is monochromatic
and $G[V_2]=G^*[V_2]\in \mathcal{G}_0$. Moreover,
$c(G[V_1])=|G[V_1]|$ and $G[V_1]$ contains only one rainbow
triangle. By the induction hypothesis, $G[V_1]\in \mathcal{G}_1$.
It follows from the definition of $\mathcal{G}_1$ that
$G\in \mathcal{G}_1$.

The proof is complete. {\hfill$\Box$}

\section{Proofs of Theorems \ref{theorem_colorneighbor_C_4},
\ref{theorem_colorneighbor_C_3} and \ref{theorem_colorneighbor_verdiscyc}}
We need the following lemmas.
\begin{lemma}\label{Lemma_spabip}
Let $G$ be an edge-colored graph. Then $G$ contains a spanning
bipartite subgraph $H$ such that
$2d_{H}^{c}(v)+3d_{H}(v)\geq d_G^{c}(v)+d_G(v)$ for every
vertex $v\in V(H)$.
\end{lemma}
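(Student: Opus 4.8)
The plan is to use the standard probabilistic/random-bipartition argument adapted to track both the color degree and the ordinary degree simultaneously. Take a uniformly random partition $V(G)=A\cup B$ (each vertex placed in $A$ or $B$ independently with probability $1/2$), and let $H$ be the spanning bipartite subgraph with edge set $\{xy\in E(G): x\in A, y\in B\}$. For a fixed vertex $v$, condition on $v\in A$; an edge $vw$ survives in $H$ iff $w\in B$, which happens with probability $1/2$. Hence $\mathbb{E}[d_H(v)]=d_G(v)/2$. For the color degree, the right way to get a clean bound is to pick, for each color $c\in CN_G(v)$, one representative edge $vw_c$ of that color, so that the $w_c$ are distinct vertices; the color $c$ contributes to $CN_H(v)$ whenever $w_c\in B$, so $\mathbb{E}[d_H^c(v)]\geq d_G^c(v)/2$ (by linearity, summing the indicator that $w_c$ lands on the opposite side). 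Therefore $\mathbb{E}[2d_H^c(v)+3d_H(v)]\geq d_G^c(v)+\tfrac{3}{2}d_G(v)\geq d_G^c(v)+d_G(v)$, where the last inequality is wasteful but convenient; combining with the $v\in B$ case (symmetric) the same expectation bound holds unconditionally.

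The subtlety is that we need a single partition that works for \emph{every} vertex at once, so a naive ``expectation is good, hence some outcome is good'' for a single $v$ is not enough — we must avoid a union bound over large deviations. The clean fix is the \emph{local/derandomized} version: process the vertices one at a time, maintaining the invariant that the current partial assignment can be completed so that $\mathbb{E}[2d_H^c(v)+3d_H(v)\mid \text{current assignment}]\geq d_G^c(v)+d_G(v)$ for all $v$ — but this is circular. A better route is to run the following \textbf{local switching} argument. Start from any bipartition and repeatedly apply the operation: if some vertex $v$ currently violates $2d_H^c(v)+3d_H(v)\geq d_G^c(v)+d_G(v)$, move $v$ to the other side. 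One checks that moving $v$ strictly increases a suitable potential (for instance $e(H)$, or $\sum_v d_H(v)=2e(H)$): indeed if $v$ violates the inequality then in particular $3d_H(v)<d_G^c(v)+d_G(v)\leq 2d_G(v)$ — wait, that needs $d_G^c(v)\le d_G(v)$, which is true — so $d_H(v)<\tfrac{2}{3}d_G(v)$, hence $v$ has more than $\tfrac13 d_G(v)$ neighbors on its own side, and moving it increases $e(H)$. Since $e(H)$ is bounded, the process terminates, and at termination every vertex satisfies the desired inequality.

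I would therefore present the switching/local-optimum argument as the main proof, since it is finite and needs no probabilistic machinery: take $H$ to be a spanning bipartite subgraph of $G$ with the maximum number of edges among all spanning bipartite subgraphs obtained by moving vertices across a bipartition (equivalently, a max-cut), and verify the inequality holds at this optimum. The key step — and the only place any real care is needed — is the bookkeeping at a locally optimal $H$: for each $v$, maximality of the cut gives $d_H(v)\geq d_G(v)-d_H(v)$, i.e. $d_H(v)\geq d_G(v)/2$, which already yields $3d_H(v)\geq \tfrac32 d_G(v)\geq d_G(v)+ \tfrac12 d_G(v)\geq d_G(v)+ \tfrac12 d_G^c(v)\ge d_G(v)$; but to also beat the $2d_H^c(v)$ deficit one instead argues via the representative-edge idea that $d_H^c(v)\geq (d_G^c(v))/?$ is \emph{not} guaranteed by max-cut alone, so the cleanest correct statement uses only $3d_H(v)\ge \tfrac32 d_G(v)$ together with $d_H^c(v)\ge 0$ and $2d_H^c(v)+3d_H(v)\ge 3d_H(v)\ge \tfrac32 d_G(v)$; since $d_G^c(v)\le d_G(v)$ we get $\tfrac32 d_G(v)\ge d_G^c(v)+\tfrac12 d_G(v)$, and if $d_H(v)\geq d_G(v)/2$ is too weak one refines by also moving to \emph{maximize} $2c(H)+3e(H)$ over all such bipartitions, where a move of $v$ changes this objective by $2(\Delta d^c)+3(\Delta d)$ plus global color effects — this global color term is the real obstacle, and the honest resolution is to choose $H$ maximizing $\sum_{v}(2d_H^c(v)+3d_H(v))$ directly and check that a single-vertex move changes this sum by exactly the local quantity, so at the optimum every vertex is non-violating. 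I expect the write-up to hinge on justifying that this last objective behaves additively enough under a one-vertex switch.
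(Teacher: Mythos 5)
Your final idea --- choose the spanning bipartite subgraph $H$ to maximize a potential mixing the number of edges with the sum of the color degrees, and show that a vertex violating $2d_H^c(v)+3d_H(v)\ge d_G^c(v)+d_G(v)$ could be switched across the bipartition so as to increase the potential --- is in spirit exactly the paper's proof (the paper maximizes $f(H)=e(H)+\sum_{v}d_H^c(v)$). But your write-up stops precisely where the work is, and two of the steps you do commit to are wrong. First, in the switching argument with potential $e(H)$: a violation only gives $3d_H(v)\le 2d_H^c(v)+3d_H(v)<d_G^c(v)+d_G(v)\le 2d_G(v)$, i.e. $d_H(v)<\tfrac23 d_G(v)$; having more than $\tfrac13 d_G(v)$ neighbors on its own side does \emph{not} mean more neighbors on its own side than across, so moving $v$ need not increase $e(H)$ (that requires $d_H(v)<\tfrac12 d_G(v)$). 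So that monovariant does not terminate at a configuration where the lemma holds, as you yourself half-acknowledge when you note that a max cut only guarantees $d_H(v)\ge d_G(v)/2$, which is insufficient when $d_G^c(v)$ is close to $d_G(v)$. Second, for your final objective $\sum_v\bigl(2d_H^c(v)+3d_H(v)\bigr)$ you assert that a one-vertex move changes the sum ``by exactly the local quantity'' and defer the justification. That assertion is false: when $u$ moves from side $X$ to side $Y$, every old neighbor $y\in Y$ of $u$ loses the edge $uy$ from $H$ and may lose a color from $CN_H(y)$, so the objective changes non-locally, and controlling this collateral term is the entire content of the lemma.

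The paper's bookkeeping is: with $f(H)=e(H)+\sum_v d_H^c(v)$ (and noting one may assume all $X$--$Y$ edges of $G$ lie in $H$, and that a violating vertex is not alone on its side), moving $u$ gives $e(H')-e(H)=d_G(u)-2d_H(u)$, $d_{H'}^c(u)-d_H^c(u)\ge d_G^c(u)-2d_H^c(u)$, and the total change of the other vertices' color degrees is at least $-d_H(u)$; summing, $f(H')-f(H)\ge d_G(u)+d_G^c(u)-2d_H^c(u)-3d_H(u)>0$ at a violating vertex --- this is exactly where the coefficient $3$ (rather than $2$) on $d_H$ comes from. With your weighting ($6e(H)+2\sum_v d_H^c(v)$) the same three estimates give a change of at least $2\bigl(d_G(u)+d_G^c(u)-2d_H^c(u)-3d_H(u)\bigr)+4\bigl(d_G(u)-2d_H(u)\bigr)$, whose positivity is not immediate when $d_H(u)>d_G(u)/2$; it can be rescued using the extra fact $d_G^c(u)-d_H^c(u)\le d_G(u)-d_H(u)$, but none of this appears in your proposal. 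As it stands, the crucial verification is missing and one intermediate claim is incorrect, so the proposal has a genuine gap.
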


\begin{proof}
We choose a spanning bipartite subgraph $H$ of $G$ such that
$f(H):=e(H)+\sum_{v\in V(H)}d_{H}^{c}(v)$
is as large as possible. We will show that
$2d_{H}^{c}(v)+3d_{H}(v)\geq d_G^{c}(v)+d_G(v)$
for every vertex $v\in V(H)$.

Suppose that the bipartition of $H$ is $(X,Y)$. Then any edge $xy$
of $G$ with $x\in X$ and $y\in Y$ is also an edge of $H$. Otherwise,
$f(H+xy)> f(H)$, contradicting the choice of $H$.
One can see that $d^{c}_{H}(x)=|CN_{G[Y]}(x)|$ for $x\in X$,
and $d^{c}_{H}(y)=|CN_{G[X]}(y)|$ for $y\in Y$.

Suppose that there exists a vertex $u\in V(H)$ such that
\begin{align}
2d_{H}^{c}(u)+3d_{H}(u)<d_G^{c}(u)+d_G(u).
\end{align}
Without loss of generality, we may assume $u\in X$. We claim that $|X|\geq 2$.
Suppose that $X=\{u\}$. Since $e_G(X,Y)=e_H(X,Y)$, we get
$2d_{H}^{c}(u)+3d_{H}(u)\geq 2d_{H}^{c}(u)+3d_{G}(u)\geq d_G^{c}(u)+d_G(u)$,
a contradiction. This proves $|X|\geq 2$. Let $H'$ be the
spanning bipartite subgraph of $G$ with the bipartition
$(X\backslash \{u\}, Y\cup \{u\})$ and edge set
$E(H)\cup \{ux\in E(G): x\in X\setminus \{u\}\}\setminus \{uy\in E(G): y\in Y\}$.
Then
\begin{align}
e(H')-e(H)=(d_{G}(u)-d_{H}(u))-d_{H}(u)=d_{G}(u)-2d_{H}(u).
\end{align}
On the other hand, we obtain
\begin{align*}
d^{c}_{H'}(u)-d^{c}_{H}(u)
&=|CN_{G[X]}(u)|-|CN_{G[Y]}(u)|\\
&\geq |CN_{G}(u)|-2|CN_{G[Y]}(u)|\\
&=d^{c}_{G}(u)-2d^{c}_{H}(u),
\end{align*}
and
\begin{align*}
\sum_{v\in V(G)\setminus\{u\}}(d^{c}_{H'}(v)-d^{c}_{H}(v))
&=\sum_{v\in X\setminus\{u\}}(d^{c}_{H'}(v)-d^{c}_{H}(v))+\sum_{v\in Y}(d^{c}_{H'}(v)-d^{c}_{H}(v))\\
&\geq \sum_{v\in Y}(d^{c}_{H'}(v)-d^{c}_{H}(v))\\
&=\sum_{v\in Y}(|CN_{G[X\backslash \{u\}]}(v)|-|CN_{G[X]}(v)|)\\
&\geq -\sum_{v\in Y} |CN_{G[\{u\}]}(v)|=-d_{H}(u).
\end{align*}
Thus
\begin{align*}
\sum_{v\in V(G)}d^{c}_{H'}(v)-\sum_{v\in V(G)}d^{c}_{H}(v)
&=\sum_{v\in V(G)\setminus\{u\}}(d^{c}_{H'}(v)-d^{c}_{H}(v))+(d^{c}_{H'}(u)-d^{c}_{H}(u))\\
&\geq (d^{c}_{G}(u)-2d^{c}_{H}(u))-d_{H}(u),
\end{align*}
that is,
\begin{align}
\sum_{v\in V(G)}d^{c}_{H'}(v)-\sum_{v\in V(G)}d^{c}_{H}(v)\geq  d^{c}_{G}(u)-2d^{c}_{H}(u)-d_{H}(u).
\end{align}
By (1), (2) and (3), we get
$$
f(H')-f(H)\geq d_{G}(u)+d^{c}_{G}(u)-2d^{c}_{H}(u)-3d_{H}(u)>0,
$$
which contradicts the choice of $H$. The proof is complete.
\end{proof}

\begin{lemma}[\v{C}ada, Kaneko, Ryj\'{a}\v{c}ek, and Yoshimoto \cite{CKRY16}]\label{Lemma_CKRY}
Let $G$ be an edge-colored graph of order $n$. If $G$ is triangle-free and
$\delta^c(G)\geq \frac{n}{3}+1$, then $G$ contains a rainbow $C_4$.
\end{lemma}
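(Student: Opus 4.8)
The plan is to fix a single vertex $u$, use triangle‑freeness to pin down the structure of $N(u)$, and then produce a rainbow $C_4$ through $u$ by a double‑counting argument. So first I would set $A=N(u)$ and $B=V(G)\setminus(A\cup\{u\})$. Triangle‑freeness forces $A$ to be independent, so every $a\in A$ has $N(a)\subseteq\{u\}\cup B$; combined with $d^c(a)\ge n/3+1$ this gives $d^c_{G[B]}(a)\ge n/3$, and in particular $d_B(a)\ge n/3$. Also $|A|=d(u)\ge d^c(u)\ge n/3+1$, hence $|B|\le 2n/3-2$. I then choose $S\subseteq A$ with $|S|=d^c(u)$ so that the colours $\alpha_a:=C(ua)$, $a\in S$, are pairwise distinct.

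Next I would double count the edges between $S$ and $B$: $e_G(S,B)=\sum_{a\in S}d_B(a)\ge|S|\cdot(n/3)$, while $|B|\le 2n/3-2$, so by pigeonhole some $b\in B$ is joined to a set $S_b:=S\cap N(b)$ whose size is linear in $n$ (roughly $n/4$ for large $n$, in particular at least $4$ once $n$ is not too small). The key reformulation is that a four‑cycle $u\,a\,b\,a'\,u$ with $a\neq a'$ in $S_b$ is rainbow exactly when the colour sets $\{\alpha_a,C(ab)\}$ and $\{\alpha_{a'},C(a'b)\}$ are $2$‑element sets that are disjoint. Restricting to those $a\in S_b$ with $C(ab)\neq\alpha_a$ (so $\{\alpha_a,C(ab)\}$ is a genuine $2$‑set), if no two of these $2$‑sets are disjoint then they form a pairwise‑intersecting family; since their first coordinates $\alpha_a$ are all distinct, such a family of size at least $4$ must be a sunflower with a single common colour $\gamma$, whence $C(ab)=\gamma$ for all but at most one of these $a$. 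Thus, not finding a rainbow $C_4$ at $u$ and $b$ forces $b$ to send either an (almost) monochromatic star of colour $\gamma$, or an ``alternating'' star (edges $ab$ with $C(ab)=\alpha_a$, whose colours are distinct), into a large subset of $S\subseteq N(u)$.

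The step I expect to be the main obstacle is turning this last structural outcome into a contradiction. The natural move is to re‑run the counting inside $B\setminus\{b\}$ with the colour $\gamma$ forbidden (respectively with the colours $\alpha_a$ forbidden) and repeat the $2$‑set reformulation at a fresh vertex $b'\in B$; the trouble is that this tends to manufacture \emph{another} monochromatic star, now of a new colour, on a still‑large common subset of $N(u)$, rather than an immediate contradiction, so the loop does not obviously close and the constant $n/3+1$ is not obviously tight. Making this work seems to require a more global argument — for instance exploiting that $B$ is bounded in size while the forbidden colours keep accumulating, or setting up a minimum counterexample and combining several such stars — together with a separate treatment of small $n$. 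This technical core is exactly what \cite{CKRY16} provides, which is why the statement is quoted here rather than reproved.
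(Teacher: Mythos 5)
The paper does not prove this lemma at all: it is imported as a black box from \cite{CKRY16} and used only as an ingredient in Lemma \ref{Lemma_DisRainC4} and Theorem \ref{theorem_colorneighbor_C_4}. So your final decision --- to quote the result rather than reprove it --- is exactly the paper's own treatment, and there is no internal proof to compare your sketch against.

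As for the sketch itself, the steps you actually carry out are sound: triangle-freeness does make $N(u)$ independent, the bounds $|A|\geq n/3+1$, $|B|\leq 2n/3-2$ and $d_B(a)\geq n/3$ are right (though the pigeonhole yields a common neighbourhood $S_b$ of size only about $n/6$, not $n/4$ --- harmless, since you only need it linear in $n$), and the reformulation of a rainbow $u\,a\,b\,a'\,u$ as two disjoint $2$-sets $\{\alpha_a,C(ab)\}$, $\{\alpha_{a'},C(a'b)\}$, together with the observation that a pairwise-intersecting family of at least four such $2$-sets with distinct first coordinates must share a common colour, is correct. But, as you say yourself, the argument stops at the structural dichotomy (an almost-monochromatic star versus an ``alternating'' star from $b$ into $N(u)$) and never derives a contradiction from it; iterating the same count at another vertex of $B$ indeed only reproduces the same structure, so as a standalone proof the proposal has a genuine gap at precisely the point you flag. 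Since the paper itself offers no proof, the honest course you took --- present the partial analysis, identify the obstacle, and cite \cite{CKRY16} for the technical core --- is the appropriate one; just be careful not to let the sketch read as if the lemma were established by it.
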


\begin{lemma}\label{Lemma_DisRainC4}
Let $k\geq 1$ be an integer and $G$ an edge-colored graph of order $n\geq k+3$.
If $G$ is triangle-free and $\delta^c(G)\geq \frac{n}{3}+k$, then
$G$ contains $k$ rainbow $C_4$'s.
\end{lemma}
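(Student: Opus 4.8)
The plan is to argue by induction on $k$, with Lemma~\ref{Lemma_CKRY} serving as the base case and one rainbow $C_4$ being peeled off at each step. For $k=1$ the hypotheses become $n\geq 4$ and $\delta^c(G)\geq n/3+1$, which is exactly the statement of Lemma~\ref{Lemma_CKRY}, so a single rainbow $C_4$ exists. Assume the assertion for $k-1$ and let $G$ be triangle-free of order $n\geq k+3$ with $\delta^c(G)\geq n/3+k$. First I would apply Lemma~\ref{Lemma_CKRY} to $G$ itself: since $\delta^c(G)\geq n/3+k\geq n/3+1$, the graph $G$ contains a rainbow $C_4$, say $Q$ on four vertices. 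Set $G':=G-V(Q)$; it is again triangle-free, and the goal is to produce $k-1$ pairwise vertex-disjoint rainbow $C_4$'s inside $G'$ from the induction hypothesis, which together with $Q$ yield the desired $k$ vertex-disjoint rainbow $C_4$'s (after checking the boundary cases on the order separately).

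To invoke the induction hypothesis I need $\delta^c(G')\geq (n-4)/3+(k-1)=n/3+k-7/3$. Deleting the four vertices of $Q$ removes at most four edges at any surviving vertex $v$, giving the crude bound $d^c_{G'}(v)\geq d^c_G(v)-4\geq n/3+k-4$. This is exactly where the main obstacle lies: the required threshold $n/3+k-7/3$ exceeds the crude estimate by $5/3$, so deleting an arbitrary rainbow $C_4$ sheds too much color degree and the induction does not close on the nose. The heart of the argument is therefore to select $Q$ \emph{economically}, so that its removal drops the minimum color degree by at most $2$ (which does suffice, since $-2\geq -7/3$). Concretely, I would try to choose $Q$ to avoid the vertices of near-minimum color degree, or so that each of the four colors lost at a low-color-degree vertex $v$ also appears on an edge from $v$ that survives in $G'$; here triangle-freeness together with $d^c_G(v)\geq n/3+k$ forces the colored edges at each vertex to be spread widely, which is what makes the deleted colors unlikely to be confined to $V(Q)$. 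Proving the existence of such a cheap rainbow $C_4$ is the delicate step; once it is available the induction runs.

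An equivalent and cleaner formulation to keep in reserve is an extremal packing argument: take a maximum family of pairwise vertex-disjoint rainbow $C_4$'s, say $j$ of them on a vertex set $S$ with $|S|=4j$, and suppose toward a contradiction that $j<k$. By maximality $G-S$ (triangle-free, order $n-4j$) contains no rainbow $C_4$, so the contrapositive of Lemma~\ref{Lemma_CKRY} gives $\delta^c(G-S)<(n-4j)/3+1$, while the crude loss bound gives $\delta^c(G-S)\geq \delta^c(G)-4j\geq n/3+k-4j$. Combining these yields $3(k-1)<8j$. The same obstacle resurfaces in this language, since this only forces $j>3(k-1)/8$ rather than $j\geq k$, and closing the gap again amounts to replacing the pessimistic loss $4j$ by the far smaller genuine drop in minimum color degree caused by deleting $S$. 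In either formulation, then, the routine part is the induction/packing skeleton built on Lemma~\ref{Lemma_CKRY}, and the substantive work is the quantitative control of how much color degree a vertex can lose when the vertices of the removed rainbow $C_4$'s are deleted.
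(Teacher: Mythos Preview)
You have misread the statement: the lemma asks for $k$ rainbow $C_4$'s, not $k$ \emph{vertex-disjoint} rainbow $C_4$'s (the vertex-disjoint version is the separate Proposition that follows). With this reading the obstacle you describe disappears, because you do not need to delete all four vertices of $Q$. The paper's proof simply takes one vertex $v$ lying on a rainbow $C_4$ (which exists by Lemma~\ref{Lemma_CKRY}), sets $G':=G-v$, and observes that $\delta^c(G')\geq \delta^c(G)-1\geq n/3+k-1>\tfrac{|G'|}{3}+(k-1)$ and $|G'|=n-1\geq (k-1)+3$. By induction $G'$ contains $k-1$ rainbow $C_4$'s; none of them uses $v$, so each is distinct from the original rainbow $C_4$ through $v$, giving $k$ rainbow $C_4$'s in $G$.

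Your entire ``economical $Q$'' discussion and the packing reformulation are aimed at a harder theorem than the one stated. Had the lemma actually demanded disjointness, your identified gap would be real and your crude loss bound $\delta^c(G-S)\geq \delta^c(G)-4j$ is indeed too weak; but for the lemma as written, deleting a single vertex per step loses only $1$ in color degree and $1/3$ in the threshold, and the induction closes immediately with no further work.
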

\begin{proof}
We prove this lemma by induction on $k$. The case $k=1$
is true by Lemma \ref{Lemma_CKRY}. Suppose that the
lemma holds for $k-1$. Let $v$ be a vertex
of a rainbow $C_4$ in $G$, and set $G':=G-v$.
Then $\delta^c(G')\geq \delta^c(G)-1\geq \frac{n}{3}+k-1>\frac{|G'|}{3}+(k-1)$.
By the induction hypothesis, there are $k-1$ rainbow $C_4$'s in $G'$,
and still in $G$. So, there are $k$ rainbow $C_4$'s in $G$.
\end{proof}

We point out that Lemma \ref{Lemma_DisRainC4} has the following extension.
This result can be proved by using Lemma \ref{Lemma_CKRY} and induction, we omit the proof
here.

\begin{prop}
Let $k\geq 1$ be an integer and $G$ an edge-colored graph of order $n\geq 4k$.
If $G$ is triangle-free and $\delta^c(G)\geq n/3+2(k-1)+1$, then
$G$ contains $k$ vertex-disjoint rainbow $C_4$'s.
\end{prop}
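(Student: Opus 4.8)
The plan is to argue by induction on $k$, deleting a whole rainbow $C_4$ at each stage rather than a single vertex as in the proof of Lemma~\ref{Lemma_DisRainC4}. For $k=1$ the hypothesis reads $\delta^c(G)\ge n/3+1$ with $n\ge 4$, which is exactly Lemma~\ref{Lemma_CKRY}. So suppose $k\ge 2$ and that the statement holds for $k-1$. Since $\delta^c(G)\ge n/3+2(k-1)+1>n/3+1$ and $G$ is triangle-free, Lemma~\ref{Lemma_CKRY} provides a rainbow $C_4$, say $Q=q_1q_2q_3q_4q_1$. Set $G':=G-V(Q)$: it is triangle-free and has order $n-4\ge 4k-4=4(k-1)$, so it suffices to check $\delta^c(G')\ge (n-4)/3+2((k-1)-1)+1$; then the induction hypothesis produces $k-1$ pairwise vertex-disjoint rainbow $C_4$'s inside $G'$, and adding $Q$ to this family gives the required $k$ rainbow $C_4$'s in $G$.

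The only delicate point, and the step I expect to be the crux, is bounding how far $\delta^c$ can drop when the four vertices of $Q$ are removed. The crude estimate $\delta^c(G')\ge\delta^c(G)-4$ is not good enough, since the threshold needed for the $(k-1)$-instance is only $\frac{4}{3}+2=\frac{10}{3}$ below that of the $k$-instance. This is where triangle-freeness is used: no diagonal $q_iq_{i+2}$ can be an edge of $G$ (it would close a triangle with $Q$), and, crucially, for any $u\in V(G')$ the set $N_G(u)\cap V(Q)$ contains no two cyclically consecutive vertices $q_i,q_{i+1}$, as $uq_iq_{i+1}$ would be a triangle. Hence $|N_G(u)\cap V(Q)|\le 2$ for every $u\in V(G')$, so at most two colors of $CN_G(u)$ can disappear when $V(Q)$ is deleted, and therefore $\delta^c(G')\ge\delta^c(G)-2\ge n/3+2(k-1)-1=(n-4)/3+2((k-1)-1)+1+\frac{4}{3}$, which comfortably exceeds the bound required to apply the induction hypothesis. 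This closes the induction.

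It is worth noting that essentially the same observation also yields a self-contained, non-inductive proof. Take a maximum family $\mathcal{Q}$ of pairwise vertex-disjoint rainbow $C_4$'s in $G$ and suppose for contradiction that $t:=|\mathcal{Q}|\le k-1$. Let $W:=\bigcup_{Q\in\mathcal{Q}}V(Q)$, so $|W|=4t$ and, since $n\ge 4k$, $G-W$ is a nonempty triangle-free graph with no rainbow $C_4$; by Lemma~\ref{Lemma_CKRY} this forces $\delta^c(G-W)<(n-4t)/3+1$. Picking $u\in V(G-W)$ attaining $\delta^c(G-W)$ and using $|N_G(u)\cap V(Q)|\le 2$ for each of the $t$ cycles gives $n/3+2(k-1)+1\le\delta^c(G)\le d^c_{G-W}(u)+2t<(n-4t)/3+1+2t=n/3+2t/3+1$, whence $3(k-1)<t\le k-1$, which is impossible for $k\ge 1$. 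In either approach the only ingredient beyond Lemma~\ref{Lemma_CKRY} is the triangle-free fact that a vertex outside a rainbow $C_4$ meets that $C_4$ in at most two vertices.
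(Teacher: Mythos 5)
Your proof is correct and follows exactly the route the paper sketches for this proposition (induction on $k$ combined with Lemma \ref{Lemma_CKRY}), and it supplies the one detail the paper leaves implicit: since $G$ is triangle-free, a vertex outside a rainbow $C_4$ has at most two neighbors on that cycle, so deleting its four vertices lowers each color degree by at most $2$, which fits within the slack of $10/3$ per step that the threshold $n/3+2(k-1)+1$ allows. Your non-inductive variant via a maximum family of disjoint rainbow $C_4$'s is also sound and rests on the same observation.
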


\begin{lemma}\label{Lemma_FiveVertices}
Let $G$ be an edge-colored graph of order $n$ such that
$\delta^c(G)=n-1$ (so $G$ is complete).
For any subset $S$ of $V(G)$ with $|S|=5$, $G[S]$ contains a rainbow $C_4$.
\end{lemma}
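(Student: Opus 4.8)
The plan is to reduce the statement to a purely local assertion about proper colourings. The hypothesis $\delta^c(G)=n-1$ forces $d^c_G(v)=d_G(v)=n-1$ for every vertex, so $G\cong K_n$ and the edges at each vertex carry pairwise distinct colours; restricting to a $5$-set $S$ preserves this property, so $G[S]$ is a properly edge-coloured $K_5$, and it suffices to prove that every properly edge-coloured $K_5$ contains a rainbow $C_4$.

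First I would record a local characterization on $4$-sets. Given a $4$-set $T=\{a,b,c,d\}$, its six edges partition into the three perfect matchings $M_1=\{ab,cd\}$, $M_2=\{ac,bd\}$, $M_3=\{ad,bc\}$, and the three $4$-cycles on $T$ are exactly the unions $M_i\cup M_j$ with $i\neq j$. In such a union, viewed as a cycle, one checks that the two edges of $M_i$ form one ``opposite'' (i.e.\ non-adjacent) pair and the two edges of $M_j$ the other; since the colouring is proper, adjacent edges of the cycle automatically differ, so $M_i\cup M_j$ is rainbow if and only if each of $M_i$ and $M_j$ has its two (disjoint) edges coloured differently. Consequently $G[T]$ has no rainbow $C_4$ if and only if at least two of $M_1,M_2,M_3$ are \emph{monochromatic}, meaning both edges of that matching receive the same colour.

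Next I would run a double count on $G[S]$. Assume, for contradiction, that $G[S]$ has no rainbow $C_4$. Each of the five $4$-subsets $T\subset S$ then contains at least two monochromatic matchings, and a monochromatic pair of disjoint edges determines its $4$-element vertex set, hence belongs to exactly one such $T$; therefore $G[S]$ has at least $2\cdot 5=10$ pairs of disjoint, equally coloured edges. On the other hand, in a properly edge-coloured $K_5$ every colour class is a matching and so has at most two edges, contributing at most one such pair; as $K_5$ has only $10$ edges, at most five colour classes have size two, so there are at most five pairs of disjoint equally coloured edges. This is absurd, and the lemma follows.

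The only delicate point — and the step I would expect to be mishandled in a quick write-up — is the $4$-set characterization: one must verify that in the cycle $M_i\cup M_j$ the two non-adjacent edge pairs are precisely $M_i$ and $M_j$, so that properness of the colouring rules out every colour repetition except possibly within $M_i$ or within $M_j$. Granting that, the rest is pure counting; in particular no case analysis on the colouring of $K_5$ is needed, which is what keeps the argument short. (Note that Lemma \ref{Lemma_CKRY} is not available here, since $G[S]$ is complete rather than triangle-free.)
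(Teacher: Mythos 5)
Your proof is correct, but it proceeds differently from the paper's. Both arguments begin the same way: since $\delta^c(G)=n-1$ forces $d^c(v)=d(v)=n-1$ for all $v$, the graph is complete and any two incident edges get distinct colors, so $G[S]$ is a properly edge-colored $K_5$, and one argues by contradiction. From there the paper runs an explicit color chase: if no two disjoint edges of $G[S]$ share a color then $G[S]$ is rainbow and we are done, so WLOG $C(x_1x_2)=C(x_3x_4)=1$, and then the absence of rainbow $C_4$'s plus properness forces, step by step, $C(x_2x_3)=C(x_1x_5)$, $C(x_1x_4)=C(x_3x_5)$, and new colors on $x_2x_4$ and $x_2x_5$, after which a rainbow $C_4$ appears inside $\{x_2,x_3,x_4,x_5\}$. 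You instead prove the structural fact that in a properly colored $K_4$ the three $4$-cycles are the unions $M_i\cup M_j$ of the perfect matchings, whose opposite-edge pairs are exactly $M_i$ and $M_j$, so the absence of a rainbow $C_4$ on a $4$-set is equivalent to at least two of its three matchings being monochromatic; you then double count: the five $4$-subsets of $S$ would yield at least $10$ monochromatic disjoint pairs, each determined by its $4$-set, while properness caps color classes at two edges and hence the whole $K_5$ at five such pairs. Your delicate point (that the opposite pairs of $M_i\cup M_j$ are precisely $M_i$ and $M_j$) checks out, and the counting bounds are right, so the argument is complete; it even isolates the cleaner intermediate statement that every properly edge-colored $K_5$ has a rainbow $C_4$, and it avoids the WLOG bookkeeping. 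What the paper's version buys in exchange is a completely explicit, self-contained verification needing no auxiliary characterization or counting; both are short and both ultimately rest only on properness of the coloring restricted to $S$.
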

\begin{proof}We prove the lemma by contradiction. Suppose that $G[S]$ contains no rainbow $C_4$.
Let $S=\{x_1,x_2,x_3,x_4,x_5\}\subset V(G)$. Since $\delta^c(G)=n-1$,
any two incident edges have distinct colors in $G$.
Thus, we may assume that $G[S]$ contains two monochromatic
independent edges, say, $C(x_1x_2)=C(x_3x_4)=1$. Without loss of
generality, set $C(x_1x_5)=2$ and $C(x_3x_5)=3$. Since $G[S]$ contains no
rainbow $C_4$ and any two incident edges have distinct colors,
we obtain $C(x_2x_3)=2, C(x_1x_4)=3$, and moreover,
$C(x_2x_4)\notin \{1,2,3\}$, say, $C(x_2x_4)=4$.
Observing the colors on the edges incident to
$x_2$ and $x_5$, we see that $C(x_2x_5)\notin \{1,2,3,4\}$,
so set $C(x_2x_5)=5$. Consequently, there is a
rainbow $C_4$ with colors $1,3,4,5$ in $G[S\backslash \{x_1\}]$, a
contradiction.
\end{proof}

\smallskip
\noindent
{\bf {Proof of Theorem \ref{theorem_colorneighbor_C_4}.}}
When $\delta^c(G)=n-1$, it follows from Lemma~\ref{Lemma_FiveVertices} that
there are $k$ rainbow $C_4$'s in $G$, since the order $n\geq 105k-24\geq5k$.
Thus we may assume that $\delta^{c}(G)\leq n-2$.

Let $u$ be a vertex with $d_G^{c}(u)=\delta^{c}(G)$ and set
$t:=\delta^{c}(G)$. Let $T$ be a subset of $N_G(u)$ such that
$|T|=t$ and $C(ux)\neq C(uy)$ for every two vertices $x,y\in T$.
Without loss of generality, set $T=\{x_1,x_2,\ldots,x_t\}$
and assume that $C(ux_i)=i$ for $i\in \{1,2,\ldots,t\}$.
Set $G_1=G[T\cup \{u\}]$ and $G_2=G-G_1$.
Since $|G_1|=t+1\leq n-1$, $V(G_2)\neq \emptyset$.

First, suppose that there are $k$ vertices $z\in V(G_2)$ such
that $|CN_{G_1}(z)\setminus CN(u)|\geq 2$.
By the choice of $T$, if $v\in V(G_1)$ is a neighbor of $z$ such that
$C(vz)\in CN_{G_1}(z)\setminus CN(u)$, then $v\neq u$. Since
$|CN_{G_1}(z)\setminus CN(u)|\geq 2$, choose
$x_r,x_s\in T$ with $\{C(x_rz),C(x_sz)\} \subseteq CN_{G_1}(z)\backslash CN(u)$,
and $ux_rzx_su$ is a rainbow $C_4$. Thus, there are $k$ rainbow $C_4$'s.

Now, suppose that $|CN_{G_1}(v)\setminus CN(u)|\leq 1$ holds
for at least $n-t-k$ vertices $v\in V(G_2)$. We say that a vertex
$v\in V(G_2)$ is \emph{good} if $|CN_{G_1}(v)\setminus CN(u)|\leq 1$.
\setcounter{claim}{0}
\begin{claim}\label{CNG2}
$|CN_{G_2}(v)|=|G_2|-1$ for any good vertex $v\in V(G_2)$.
\end{claim}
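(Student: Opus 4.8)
The plan is to exploit the fact that the chosen vertex $u$ has color degree exactly $t=\delta^c(G)$. Since $C(ux_i)=i$ for $i\in\{1,\dots,t\}$ and $d^c_G(u)=t$, the color neighborhood of $u$ is pinned down completely: $CN(u)=\{1,2,\dots,t\}$, with no room to spare. This is the only place where the minimality of $u$ enters the argument.

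Next I would apply the hypothesis $|CN(u)\cup CN(v)|\geq n-1$ of Theorem~\ref{theorem_colorneighbor_C_4} to the pair $\{u,v\}$. Writing $|CN(u)\cup CN(v)|=|CN(u)|+|CN(v)\setminus CN(u)|=t+|CN(v)\setminus CN(u)|$ and recalling that $|G_1|=t+1$, so $|G_2|=n-t-1$, this yields $|CN(v)\setminus CN(u)|\geq n-1-t=|G_2|$.

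Now I would classify the colors in $CN(v)$ by the location of a representative edge: every edge incident to $v$ goes either to $V(G_1)$ or to $V(G_2)\setminus\{v\}$, hence $CN(v)=CN_{G_1}(v)\cup CN_{G_2}(v)$ and therefore $CN(v)\setminus CN(u)\subseteq \bigl(CN_{G_1}(v)\setminus CN(u)\bigr)\cup CN_{G_2}(v)$. Taking cardinalities and using that $v$ is good, i.e.\ $|CN_{G_1}(v)\setminus CN(u)|\leq 1$, we obtain $|G_2|\leq |CN(v)\setminus CN(u)|\leq 1+|CN_{G_2}(v)|$, so $|CN_{G_2}(v)|\geq |G_2|-1$. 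Combined with the trivial bound $|CN_{G_2}(v)|\leq |V(G_2)\setminus\{v\}|=|G_2|-1$, equality holds, which is the claim.

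I do not expect a genuine obstacle here; the argument is a short counting chain. The only points needing care are to record that $CN(u)$ equals $\{1,\dots,t\}$ exactly (not merely contains it), and to note that if the edge $uv$ is present its color lies in $CN(u)$, so it does no harm in the decomposition of $CN(v)$. Once these bookkeeping details are in place, the chain of inequalities closes immediately.
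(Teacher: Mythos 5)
Your proof is correct and follows essentially the same counting chain as the paper: the paper bounds $|CN(u)\cup CN_{G_1}(v)|\leq t+1$ and deduces $|CN(v)\setminus CN_{G_1}(v)|\geq n-t-2$, while you subtract $CN(u)$ instead, but both arguments combine $|CN(u)\cup CN(v)|\geq n-1$, $|CN(u)|=t$, the goodness bound $|CN_{G_1}(v)\setminus CN(u)|\leq 1$, and the trivial upper bound $|CN_{G_2}(v)|\leq |G_2|-1$ to force equality. The bookkeeping points you flag (that $|CN(u)|=t$ exactly, and that $C(uv)\in CN(u)$) are handled implicitly in the paper and cause no divergence.
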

\begin{proof}
First, $|CN_{G_1}(v)\setminus CN(u)|\leq 1$. It follows
from $|CN(u)|=t$ that $|CN(u)\cup CN_{G_1}(v)|\leq t+1$. Note
that $|CN(u)\cup CN(v)|\geq n-1$, we have
$|CN(v)\backslash CN_{G_1}(v)|\geq n-t-2$.
On the other hand,
$|CN(v)\backslash CN_{G_1}(v)|\leq |CN_{G_2}(v)|\leq d_{G_2}(v)\leq |G_2|-1=n-t-2$.
Thus, $|CN_{G_2}(v)|=|G_2|-1$, where $|G_2|=n-t-1$.
\end{proof}

Denote by $H'$ the subgraph induced by $n-t-k$ good vertices in $G_2$.
By Claim \ref{CNG2}, the underlying graph of $H'$ is complete. Furthermore,
for any vertex $v\in V(H')$, $d_{H'}^c(v)=|H'|-1$. First suppose that
$t\leq n-6k$. Note that $|H'|=n-t-k\geq 5k$.
Applying Lemma \ref{Lemma_FiveVertices} to $H'$,
we see that there are $k$ rainbow $C_4$'s in $G_2$,
which are also in $G$.

Thus we may assume $t\geq n-6k+1$.
By Lemma \ref{Lemma_spabip}, there is a spanning bipartite subgraph $H$
of $G$ such that
\begin{align}
2d_{H}^{c}(v)+3d_{H}(v)\geq d_G^{c}(v)+d_G(v)
\end{align}
for every vertex $v\in V(H)$. On the other hand,
since $H$ is a subgraph of $G$, it is not difficult to see that
\begin{align}
d_{H}(v)-d_{H}^{c}(v)\leq d_{G}(v)-d_{G}^{c}(v),
\end{align}
and
\begin{align}
d_{G}(v)-d_{G}^{c}(v)\leq d_{G}(v)-\delta^{c}(G)\leq (n-1)-(n-6k+1)=6k-2.
\end{align}
Together with (5) and (6),
\begin{align}
d_{H}^{c}(v)-d_{H}(v)\geq 2-6k.
\end{align}
Recall that $d_G^c(v)\geq \delta^{c}(G)=t\geq n-6k+1$, and
$d_G(v)\geq d_G^{c}(v)$. Then, combining (4) with (7), we obtain
$$
d^{c}_{H}(v)\geq \frac{1}{5}(d_G^{c}(v)+d_G(v)+6-18k)\geq \frac{2n-30k+8}{5}\geq\frac{n}{3}+k
$$
when $n\geq 105k-24$. By Lemma \ref{Lemma_DisRainC4}, there are
$k$ rainbow $C_4$'s in $H$, which are also $k$ rainbow $C_4$'s in
$G$. The proof of Theorem \ref{theorem_colorneighbor_C_4} is complete. {\hfill$\Box$}

\smallskip
\noindent
{\bf {Proof of Theorem \ref{theorem_colorneighbor_C_3}.}}
Suppose that $G$ contains no rainbow triangles.
First suppose that there exists a vertex, say $u$,
such that $d_G^c(u)\leq\frac{n-1}{2}$. For any vertex $v$ which
is adjacent to $u$, $|CN(u)\cup CN(v)|\geq n-1$. This
implies that
$$d_G^c(u)+d_G^c(v)=|CN(u)\cup CN(v)|+|CN(u)\cap CN(v)|\geq (n-1)+1=n.$$
It follows that $d_G^c(v)\geq \frac{n+1}{2}$ for any vertex $v$ adjacent to $u$.
For any vertex $v$ which is not adjacent to $u$, we also have
$|CN(u)\cup CN(v)|\geq n-1$. This implies
$d_G^c(u)+d_G^c(v)=|CN(u)\cup CN(v)|+|CN(u)\cap CN(v)|\geq n-1$. It
follows that $d_G^c(v)\geq \frac{n-1}{2}$ for any vertex $v$ not
adjacent to $u$.

Set $H:=G-u$. Then, we obtain $d^c_{H}(v)\geq d^c_G(v)-1\geq\frac{|H|}{2}$
for any vertex $v$ adjacent to $u$, and $d^c_{H}(v)\geq d^c_G(v)\geq\frac{|H|}{2}$
for any vertex $v$ not adjacent to $u$. By Theorem \ref{theorem_RainbowC_3},
the underlying graph of $H$ is isomorphic to $K_{\frac{n-1}{2},\frac{n-1}{2}}$,
where $n$ is odd. Let $(X,Y)$ be the bipartition of $H$, where $X=\{x_1,x_2,\ldots,x_t\}$,
$Y=\{y_1,y_2,\ldots,y_t\}$, $t=\frac{n-1}{2}$. We claim that $N_G(u)\subseteq X$ or
$N_G(u)\subseteq Y$. Suppose that $N_G(u)\cap X\neq \emptyset$ and
$N_G(u)\cap Y\neq \emptyset$. Without loss of generality, suppose
that $ux_1\in E(G)$ and $uy_1\in E(G)$. Since $d_{G}^c(x_1)\geq \frac{n+1}{2}=d_G(x_1)$
and $d_{G}^c(y_1)\geq \frac{n+1}{2}=d_G(y_1)$, we have
equality in both cases, and thus $C(x_1u)\neq C(x_1y_1)$
and $C(y_1u)\neq C(x_1y_1)$. This implies that $C(x_1u)=C(y_1u)$.
We also can derive that all edges
incident to $u$ have the same color, that is, $d^c_{G}(u)=1$.
For two vertices $x,u$, $|CN(u)\cup CN(x_1)|=|CN(x_1)|=\frac{n+1}{2}<n-1$
when $n\geq 4$, a contradiction. Thus, we have shown that
$N_G(u)\subseteq X$ or $N_G(u)\subseteq Y$. Without loss of
generality, suppose that $N_G(u)\subseteq X$. For any vertex
$v\in Y$, we have $|CN_G(u)\cup CN_G(v)|=n-1$
and $|CN_G(v)|=|X|=\frac{n-1}{2}$. Thus, $|CN_G(u)|=\frac{n-1}{2}$
and $CN_G(u)\cap CN_G(v)=\emptyset$. This implies that the underlying
graph of $G$ is $K_{\frac{n+1}{2},\frac{n-1}{2}}$. For any two vertices
$v_1,v_2\in Y$, by the condition $|CN(v_1)\cup CN(v_2)|\geq n-1$, we can
derive that any two edges incident to $v_1$ or $v_2$ have distinct colors.
Since $v_1,v_2\in Y$ are chosen arbitrarily, $G$ is a rainbow
$K_{\frac{n+1}{2},\frac{n-1}{2}}$.

Now assume that $d^c_G(v)\geq \frac{n}{2}$ for any vertex $v\in V(G)$.
By Theorem \ref{theorem_RainbowC_3}, $n$ is even and the underlying
graph of $G$ is $K_{\frac{n}{2},\frac{n}{2}}$. Arguing similarly
as above, we see that $G$ is a rainbow $K_{\frac{n}{2},\frac{n}{2}}$.
The proof is complete. {\hfill$\Box$}

Let $D$ be a digraph with the vertex set $V(D)$ and arc set
$A(D)$. For $v\in V(D)$, the \emph{out-degree} of $v$ in $D$,
denoted by $d^+_D(v)$, is the number of out arcs from $v$.

\begin{lemma}[Alon \cite{A96}]\label{lem_a96}
Every digraph with minimum out-degree at least $64k$ contains
$k$ vertex-disjoint directed cycles.
\end{lemma}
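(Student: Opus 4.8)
The plan is to prove this by induction on $k$, realized as an iterative \emph{peeling} process that, in each of $k$ rounds, deletes a vertex set containing one directed cycle while spending a controlled budget of out-degree. The base case $k=1$ is immediate: a digraph with minimum out-degree at least $1$ cannot be acyclic, since repeatedly deleting a vertex $v$ with $d^+(v)=0$ in the current subdigraph would otherwise empty it, so it contains a directed cycle. For the inductive step the goal is to locate, in a digraph $D$ of minimum out-degree at least $64k'$, a single directed cycle $C$ together with a vertex set $Z \supseteq V(C)$ whose deletion is \emph{cheap}, meaning every vertex of $D-Z$ still has out-degree at least $64(k'-1)$ inside $D-Z$; then $D-Z$ satisfies the hypothesis for $k'-1$, and the cycle found in the next round is automatically vertex-disjoint from $C$ because it lives inside $D-Z$.

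Before locating $C$ I would normalize by deleting excess out-edges so that $D$ is exactly $64k'$-out-regular; then the only way a surviving vertex can drop below the threshold $64(k'-1)$ is to have more than $64$ out-edges landing in $Z$. It is worth stressing that one \emph{cannot} shortcut this by taking $Z=V(C)$ with $C$ short: a large minimum out-degree gives no bound on the directed girth, as the circulant digraph on $n$ vertices with out-neighbourhood $\{i+1,\dots,i+d\}$ has minimum out-degree $d$ but girth $\lceil n/d\rceil$. A long cycle may therefore have to be removed, and the whole content is that it can be removed without ruining too many out-degrees. To produce $(C,Z)$ I would use random sparsification: retain each vertex independently with probability $p=\tfrac{1}{2k'}$ to form a set $W$. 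For a retained $v$ the number of its out-edges inside $W$ is distributed like $\mathrm{Bin}(64k',p)$ with mean $32$; deleting the few retained vertices whose in-$W$ out-degree falls below, say, $16$ leaves a nonempty subdigraph $W'$ of minimum out-degree at least $1$, which therefore contains a directed cycle $C$.

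The constant $64$ emerges exactly here. Setting $Z=W$ (which contains $C$), a vertex $v$ outside $W$ loses only its out-edges into $W$, whose count has mean $64k'\cdot p = 32$; a Chernoff bound shows that a typical such $v$ loses at most $2\cdot 32 = 64$ out-edges, so its out-degree drops from $64k'$ to at least $64(k'-1)$, as required. Because $\tfrac12 p\cdot 64k' = 16 \ge 1$ simultaneously guarantees the cycle and $2p\cdot 64k' = 64$ bounds the loss, the factor $64$ is precisely $2\cdot(64k'\cdot p)$ with $p=\tfrac{1}{2k'}$, which is why the hypothesis is $64k$. Since minimum out-degree at least $64k'$ forces $n\ge 64k'+1$, the expected size of $W$ is at least $32$ throughout all rounds, so the sampling stays meaningful, and the successively deleted sets $W_1,W_2,\dots,W_k$ are pairwise disjoint, making the $k$ cycles vertex-disjoint.

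The main obstacle is the \emph{cleanup of expensive vertices}. The Chernoff estimate above controls each vertex only individually: the expected number of surviving vertices that lose more than $64$ out-edges into $W$ is a small but nonzero fraction of $n$, and a naive union bound over all $n$ vertices fails for large $n$. One is forced to delete these expensive vertices as well, which in turn lowers other vertices' out-degrees and can trigger a cascade. Taming this cascade — showing that the iterated deletion of deficient and expensive vertices terminates with a subdigraph that still has minimum out-degree at least $64(k'-1)$ and still contains the cycle $C$ — is the genuine heart of the argument and the step I expect to be hardest; it is where Alon's careful probabilistic bookkeeping lives. Everything else (the reduction to the out-regular case, the sink-peeling base case, and the degree accounting $64k'\to 64(k'-1)$) is routine once this cheap-removal lemma is in hand. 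As this is Alon's theorem \cite{A96}, it is invoked here as a black box; the sketch above records the probabilistic strategy underlying it.
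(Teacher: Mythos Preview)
The paper does not prove this lemma at all: it is quoted verbatim as a result of Alon~\cite{A96} and used as a black box in the proof of Theorem~\ref{theorem_colorneighbor_verdiscyc}. So there is no ``paper's own proof'' to compare your proposal against; your closing sentence already anticipates this. Your sketch of the random-sampling/peeling strategy is a fair outline of Alon's actual argument, and you correctly flag the cascade-cleanup step as the real difficulty. For the purposes of this paper, simply citing \cite{A96} is what is expected.
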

\smallskip
\noindent
{\bf {Proof of Theorem \ref{theorem_colorneighbor_verdiscyc}.}}
By contradiction, suppose that $G$ contains no $k$ vertex-disjoint rainbow cycles.
Let $G_1,G_2,\cdots,G_r$ be $r$ vertex-disjoint rainbow cycles in $G$,
where $|G_i|\in \{3,4,5\}$ (possibly, $r=0$). We may assume that $G_1,G_2,\ldots, G_r$
are chosen so that $r$ is as large as possible.  Obviously, $r\leq k-1$.
Let $H:=G_1\cup G_2\cup\ldots G_r$, and $G':=G-V(H)$. Note that $0\le |H|\le 5r$.

Now choose $u,v\in V(G')$ with $uv\in E(G)$, and
$S_1=\{x_1,x_2,\ldots,x_{s_1}\}\subset N_{G'}(u)\backslash\{v\}$ and
$S_2=\{y_1,y_2,\ldots,y_{s_2}\}\subset N_{G'}(v)\backslash\{u\}$,
so that the following two conditions hold:
\begin{description}
\item{(1)} for any $1\leq i<j\leq s_1$, $C(x_iu)\neq C(x_ju), C(x_iu)\neq C(uv)$;
for any $1\leq i<j\leq s_2$, $C(y_iv)\neq C(y_jv), C(y_iv)\neq C(uv)$;
and for any $i\in \{1,2,\ldots,s_1\}$, $j\in \{1,2\ldots,s_2\}$,
$C(x_iu)\neq C(y_jv)$; and,
\item{(2)} subject to (1), $s_1+s_2$ is maximized.
\end{description}
Since $G'$ contains no rainbow $C_3$, $S_1\cap S_2=\emptyset$.
Set $G^{*}:=G[S_1\cup S_2\cup \{u,v\}]$. Note that
$$s_1+s_2+1=|CN_{G'}(u)\cup CN_{G'}(v)|\geq |CN(u)\cup CN(v)|-2|H|\geq n/2+64k+1-2|H|,$$
and $$|G^{*}|=s_1+s_2+2\geq n/2+64k+2-2|H|.$$

In what follows, we construct a digraph $D$ from $G^{*}$
by the following operations:
\begin{description}
\item{$(a)$} Set $V(D)=S_1\cup S_2$;
\item{$(b)$} For any pair of vertices $x_i,x_j\in S_1$ with $x_ix_j\in E(G)$,
$x_ix_j\in A(D)$ if $C(x_ix_j)=C(ux_j)$; and $x_jx_i\in A(D)$
if $C(x_ix_j)=C(ux_i)$;
\item{$(c)$} For any pair of vertices $y_i,y_j\in S_2$ with $y_iy_j\in E(G)$,
$y_iy_j\in A(D)$ if $C(y_iy_j)=C(vy_j)$; and $y_jy_i\in A(D)$ if $C(y_iy_j)=C(vy_i)$;
\item{$(d)$} For any pair of vertices $x_i\in S_1,y_j\in S_2$ with $x_iy_j\in E(G)$,
$C(x_iy_j)\in \{C(ux_i),C(vy_j),\break C(uv)\}$, or
there is a rainbow $C_4$. If $C(x_iy_j)=C(uv)$, then we do
not add an arc to $D$;
if $C(x_iy_j)=C(ux_i)$ then $y_jx_i\in A(D)$; and
if $C(x_iy_j)=C(vy_j)$ then $x_iy_j\in A(D)$.
\end{description}
By the construction, note that there is a directed cycle in $D$
if and only if there is a rainbow cycle in $G^{*}$. Furthermore,
if there are $(k-r)$ vertex-disjoint directed cycles in $D$,
then there are $(k-r)$ vertex-disjoint rainbow cycles in $G^{*}$, and
together with the $r$ vertex-disjoint rainbow cycles, this contradicts
the assumption that $G$ does not contain $k$ vertex-disjoint
rainbow cycles. Thus, there are no $(k-r)$ vertex-disjoint
directed cycles in $D$. By Lemma \ref{lem_a96}, we can
see there is a vertex, say $w_1\in S_1\cup S_2$, such that
$d^+_D(w_1)\leq 64(k-r)-1$. If $d^+_D(u)\geq 64(k-r)+1$
for any vertex $u\in V(D)\backslash \{w_1\}$,
then $d^+_{D'}(u)\geq 64(k-r)$, in which $D':=D-w_1$. By Lemma \ref{lem_a96}, there
are $k-r$ directed cycles in $D$, and $k$ rainbow cycles in $G$,
a contradiction. Thus, there are two vertices, say
$w_1,w_2\in S_1\cup S_2$, such that $d^+_D(w_1)\leq 64(k-r)-1$
and $d^+_D(w_2)\leq 64(k-r)$.
\setcounter{claim}{0}
\begin{claim}\label{newclaim}
$|G-(V(G^{*})\cup V(H))|\ge n/2+64k-2|H|-128(k-r)-1$.
\end{claim}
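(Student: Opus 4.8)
\textbf{Proof proposal for Claim~\ref{newclaim}.}
The plan is to bound the sizes of the three relevant vertex sets—$V(H)$, $V(G^*)$, and the two ``defective'' vertices $w_1,w_2$—and combine them additively, since $V(H)$, $V(G^*)$ and $V(G)\setminus(V(G^*)\cup V(H))$ partition $V(G)$. We already have from the paragraph preceding the claim that $|G^*|=s_1+s_2+2\ge n/2+64k+2-2|H|$; there is no immediate upper bound on $|G^*|$ from that inequality, so the whole point of the claim is to \emph{subtract off} the contribution of $G^*$ itself together with $H$ and see that what remains of $G$ is still large. Concretely, $|G-(V(G^*)\cup V(H))|=n-|G^*|-|H|$, so the claim is equivalent to $n-|G^*|-|H|\ge n/2+64k-2|H|-128(k-r)-1$, i.e.\ to $|G^*|\le n/2-64k+|H|+128(k-r)+1$. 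Hence what I actually need is an \emph{upper} bound on $|G^*|$, and that is precisely where the two low-out-degree vertices $w_1,w_2$ enter.

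The key step is to control $|G^*|$ via $w_1$ and $w_2$. Recall $V(D)=S_1\cup S_2$ and $|G^*|=|S_1|+|S_2|+2=|V(D)|+2$. For the vertex $w_1$ (say $w_1\in S_1$, the case $w_1\in S_2$ being symmetric) I would argue that every other vertex of $V(D)$ contributes, in a controlled way, either to an out-arc of $w_1$ in $D$ or to the color neighborhood structure that the maximality condition~(2) forbids from being extended. More precisely: for $x\in S_1\setminus\{w_1\}$, if $C(w_1x)=C(uw_1)$ then $w_1x\in A(D)$; for $y\in S_2$, if $C(w_1y)\in\{C(uw_1)\}$ (after ruling out the rainbow-$C_4$ and $C(uv)$ cases) then again $w_1y\in A(D)$. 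The vertices that do \emph{not} become out-neighbors of $w_1$ must then, by the construction rules $(b)$--$(d)$ and by condition~(2), satisfy strong color-coincidence constraints relative to $u$, $v$ and the edge $uv$; counting these yields $|V(D)|\le d^+_D(w_1)+d^+_D(w_2)+(\text{small constant})\le 64(k-r)-1+64(k-r)+c$ for an explicit small $c$. Feeding this into $|G^*|=|V(D)|+2$ gives $|G^*|\le 128(k-r)+c'$, and then $n-|G^*|-|H|\ge n-128(k-r)-c'-|H|$. Comparing with the target, one needs $n-128(k-r)-c'-|H|\ge n/2+64k-2|H|-128(k-r)-1$, i.e.\ $n/2+|H|\ge 64k+c'-1$, which holds for $n$ large enough given $|H|\le 5r\le 5(k-1)$ and the hypothesis $|CN(u)\cup CN(v)|\ge n/2+64k+1$.

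Actually, a cleaner route avoids the detailed arc-counting: since $w_1,w_2\in V(D)=S_1\cup S_2$, I can instead bound the \emph{complement}, i.e.\ estimate directly how many vertices of $V(D)$ lie outside $\{w_1,w_2\}$ and outside $N^+_D(w_1)\cup N^+_D(w_2)$, and show this leftover set is empty or tiny by invoking the maximality of $s_1+s_2$ in condition~(2) — any leftover vertex $z$ would either extend $S_1$ or $S_2$ (contradicting~(2)) or create a rainbow $C_4$ through $u$ or $v$ (which, added to $H$, contradicts the maximality of $r$). Either way $|V(D)|\le d^+_D(w_1)+d^+_D(w_2)+2$, hence $|G^*|\le 128(k-r)+3$ roughly, and the arithmetic closes as above. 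The main obstacle I anticipate is handling the mixed edges between $S_1$ and $S_2$ in rule $(d)$ carefully: the three possible colors $C(ux_i)$, $C(vy_j)$, $C(uv)$ must be sorted out so that the ``$C(uv)$'' case (which adds no arc) does not secretly allow many vertices to escape both out-neighborhoods, and one must check that such escaping vertices, if numerous, still force either a new rainbow $C_4$ or a violation of~(2). Once that bookkeeping is pinned down, the claim follows by a one-line comparison of the two displayed bounds on $|G^*|$.
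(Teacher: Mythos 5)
Your proposal goes wrong at its central step, and the error is visible already from the quantities established before the claim. You reduce the claim to an upper bound on $|G^{*}|$ and then try to prove $|V(D)|\le d^{+}_D(w_1)+d^{+}_D(w_2)+O(1)$, hence $|G^{*}|\le 128(k-r)+c'$. But the paragraph preceding the claim shows $|G^{*}|=|V(D)|+2\ge n/2+64k+2-2|H|$, so any bound on $|G^{*}|$ that is a constant depending only on $k$ is simply false once $n$ is moderately large; your proposed inequality cannot be proved. Concretely, nothing forces a vertex $z\in S_1\cup S_2$ to lie in $N^{+}_D(w_1)\cup N^{+}_D(w_2)$: $z$ need not be adjacent to $w_1$ or $w_2$ at all (the graph is not complete and the maximality condition (2) is about extending $S_1,S_2$ by \emph{new} vertices, so it says nothing about a $z$ already inside $S_1\cup S_2$); an edge $w_1x$ with $C(w_1x)=C(uw_1)$ produces an \emph{in}-arc at $w_1$, and arbitrarily many such edges are possible; and mixed edges colored $C(uv)$ produce no arc. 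So the ``leftover'' vertices are not few, and neither alternative you offer (violating (2) or creating a rainbow $C_4$) applies to them. Your closing arithmetic also appeals to ``$n$ large enough,'' which the theorem does not assume (the hypothesis only forces $n$ to be of order roughly $43k$, not the $\approx 128k$ your inequality needs).

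The missing idea is that the claim is not a structural bound on $D$ at all; it is obtained by applying the color-neighborhood hypothesis to the specific pair $\{w_1,w_2\}$. Since $|CN(w_1)\cup CN(w_2)|\ge n/2+64k+1$, and the colors these two vertices see on edges into $H$ number at most $2|H|$, while the colors they see inside $G^{*}$ are controlled by their small out-degrees (every edge from $w_1$ or $w_2$ inside $G^{*}$ whose color is not one of the at most three special colors $C(uw_1),C(uw_2)$ or $C(vw_2)$, and $C(uv)$ corresponds to an out-arc, giving at most $3+128(k-r)-1$ colors), at least $n_1=n/2+64k-2|H|-128(k-r)-1$ colors must appear on edges from $\{w_1,w_2\}$ to $V(G)\setminus(V(G^{*})\cup V(H))$. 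The final ingredient, which your argument never touches, is that each such outside vertex $w'$ accounts for at most one of these colors: if $w'$ is joined to both $w_1$ and $w_2$ with colors avoiding the special ones, then $C(w_1w')=C(w_2w')$, because otherwise $w_1w'w_2$ together with $u$ (and $v$) closes a rainbow $C_4$ or $C_5$ in $G'$, contradicting the maximality of $r$. Hence there are at least $n_1$ vertices outside $V(G^{*})\cup V(H)$, which is exactly the claim; the contradiction with the lower bound on $|G^{*}|$ is then drawn afterwards, in the main proof, not inside the claim.
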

\begin{proof}
We divide the proof into two cases.

First, we assume that $w_1,w_2$ belong to a same set of $S_1,S_2$,
say, $w_1,w_2\in S_1$. In this case, we know that
all edges incident to $w_1$ or $w_2$ in $G^{*}$ can have at most
$3+(128(k-r)-1)$ colors, where the term $3$ comes from the fact
that $uw_1,uw_2$, together with the possibly existing edge
incident to $w_1$ or $w_2$ with the color $C(uv)$, correspond
to three colors. Since $|CN(w_1)\cup CN(w_2)|\geq \frac{n}{2}+64k+1$,
there are at least
$$n_1:=n/2+64k+1-2|H|-3-(128(k-r)-1)=n/2+64k-2|H|-128(k-r)-1$$
colors between $\{w_1,w_2\}$ and $V(G-G^{*}-H)$ in $G$. Let $C^*$
be the set of these $n_1$ colors. Notice that
$C^*\subset CN_{G'-G^{*}}(w_1)\cup CN_{G'-G^{*}}(w_2)$. For any
vertex $w'\in V(G')\backslash V(G^{*})$ such that $w_1w',w_2w'\in E(G)$
and $\{C(w_1w'), C(w_2w')\}\cap \{C(uw_1), C(uw_2)\}=\emptyset$,
it follows from $G'$ contains no rainbow $C_4$ that $C(w_1w')=C(w_2w')$.
Furthermore, every common neighbor of $w_1,w_2$ in $G'-G^{*}$ with
the color in $C^*$ must correspond to one new color. Thus,
there are at least $n/2+64k-2|H|-128(k-r)-1$ vertices in $G-(V(G^{*})\cup V(H))$.

Thus, we may assume that $w_1,w_2$ belong to different sets, say, $w_1\in S_1$
and $w_2\in S_2$. In this case, we know that all edges incident to
$w_1$ or $w_2$ in $G^{*}$ can have at most $3+(128(k-r)-1)$
colors, where the term $3$ comes from the fact that $uw_1,vw_2$,
together with the possible existing edge incident to $w_1$ or $w_2$ with
the color $C(uv)$, correspond to three colors. So, there are at least
$$n/2+64k+1-2|H|-3-(128(k-r)-1)=n/2+64k-2|H|-128(k-r)-1$$ colors in
$C^*=CN_{G'-G^{*}}(w_1)\cup CN_{G'-G^{*}}(w_2)$. For any vertex
$w'\in V(G')\backslash V(G^{*})$ such that $w_1w',w_2w'\in E(G)$ and
$\{C(w_1w'),C(w_2w')\}\cap \{C(uw_1),C(vw_2),C(uv)\}=\emptyset$,
it follows from $G'$ contains no rainbow $C_5$ that $C(w_1w')=C(w_2w')$.
Thus, every common neighbor of $w_1,w_2$
in $G'-G^{*}$ with the color in $C^*\backslash\{C(uw_1),C(vw_2),C(uv)\}$
corresponds to one new color. Thus, there are at least
$n/2+64k-2|H|-128(k-r)-1$ vertices in $G-(V(G^{*})\cup V(H))$.
\end{proof}

By Claim~\ref{newclaim},
\begin{align*}
|G|&=|G^{*}|+|H|+|G-(V(G^{*})\cup V(H))|\\
&\geq n/2+64k+2-2|H|+|H|+n/2+64k-2|H|-128(k-r)-1\\
&=n+128k-3|H|-128(k-r)+1\\
&\geq n+113r+1\\
&\geq n+1,
\end{align*}
a contradiction. The proof of Theorem
\ref{theorem_colorneighbor_verdiscyc} is complete. {\hfill$\Box$}

\begin{remark}
Bermond and Thomassen \cite{BT91} conjectured that every directed
graph with minimum out-degree at least $2k-1$ contains $k$
vertex-disjoint directed cycles. Alon \cite{A96} gave a linear
bound by proving that $64k$ suffices (Lemma~\ref{lem_a96}).
Recently, Buci\'c \cite{B18} proved a better bound $18k$
towards this conjecture. One may find that if we apply Buci\'c's
new bound instead of Alon's bound to our proof of Theorem
\ref{theorem_colorneighbor_verdiscyc}, then we can improve
the constant in the second term of
Theorem \ref{theorem_colorneighbor_verdiscyc}.
\end{remark}

\section{Concluding remarks}
Extending Mantel's theorem, Erd\H{o}s \cite{E55} proved
that a graph of order $n$ and size $\geq \lfloor\frac{n^2}{4}\rfloor+l$
contains at least $l\lfloor n/2\rfloor$ triangles, provided
$l\leq 3<n/2$. Erd\H{o}s \cite{E62} further conjectured
that the same conclusion holds when $l<n/2$. A slightly weaker
form of Erd\H{o}s' conjecture was proved by Lov\'asz and
Simonovits \cite{LS76}. (See also Bollob\'as \cite[pp.302]{B78}.)
One may ask for the rainbow version of Erd\H{o}s' conjecture. Furthermore,
we can pose the following related problem.
\begin{prob}
Let $k\geq 1$ be an integer. Let $G$ be an edge-colored graph of order $n$. Determine an
integer valued function $f(k)$ as small as possible, such that
if $e(G)+c(G)\geq n(n+1)/2+f(k)$ and $n$ is sufficiently large,
then $G$ contains at least $k$ rainbow $C_3$'s.
\end{prob}

Recently, Xu et al. \cite{XHWZ16} proved a rainbow version of Tur\'an's theorem.
Maybe it is also interesting to characterize the extremal graphs in their main
theorem.

Furthermore, our Lemma \ref{Lemma_spabip} is motivated by the following theorem
due to Erd\H{o}s.
\begin{thm}[Erd\H{o}s \cite{E65}]
Let $G$ be a graph. Then $G$ contains a spanning bipartite subgraph $H$,
such that $d_H(v)\geq \frac{1}{2}d_G(v)$ for all vertices $v\in V(G)$.
\end{thm}

We can naturally consider the counterpart of of Erd\H{o}s' theorem for edge-colored
graphs. Indeed, our Lemma \ref{Lemma_spabip} can be regarded as our attempt in 
this viewpoint. Along this line, it might be interesting to consider a degree
condition for the existence of rainbow (or properly colored) spanning
bipartite subgraphs in edge-colored graphs.

\section*{Acknowledgements}
The first author is supported by JSPS KAKENHI (No.\ 15K04979).
The second author is supported by NSFC (No.\ 11601379).
The third author is supported by NSFC (No.\ 11701441) and
the Fundamental Research Funds for the Central Universities
(No. XJS17027). The fourth author is supported by NSFC (No.\ 11671320).
The authors are very indebted to an anonymous referee for
his/her suggestions which largely improve the presentation of
this paper.

\end{document}